\numberwithin{equation}{section}
\DeclareMathOperator{\T}{T}
\DeclareMathOperator{\supp}{supp}
\newcommand{\MR}{\textit{MR}}
\newcommand{\1}{\mathds{1}}
\newcommand{\K}{\mathds{K}}
\newcommand{\R}{\mathds{R}}
\newcommand{\C}{\mathds{C}}
\newcommand{\N}{\mathds{N}}
\newcommand{\A}{\mathcal{A}}
\newcommand{\B}{\mathcal{B}}
\renewcommand{\L}{\mathcal{L}}
\renewcommand{\d}{\mathrm{d}}
\renewcommand{\Re}{\operatorname{Re}}
\newcommand{\fra}{\mathfrak{a}}
\renewcommand{\mid}{\, \vert \,}
\DeclarePairedDelimiter\abs{\lvert}{\rvert}
 \DeclarePairedDelimiter\norm{\lVert}{\rVert}
\theoremstyle{plain}
\newtheorem{theorem}{Theorem}[section]
\newtheorem{proposition}[theorem]{Proposition}
\newtheorem{lemma}[theorem]{Lemma}
\newtheorem{corollary}[theorem]{Corollary}
\theoremstyle{definition}
\theoremstyle{remark}
\begin{document}
\allowdisplaybreaks
\title{Non-Autonomous Maximal Regularity for Forms of Bounded Variation}
\author{Dominik Dier}

\maketitle

\begin{abstract}\label{abstract}
We consider a non-autonomous evolutionary problem
\[
u' (t)+\A(t)u(t)=f(t), \quad u(0)=u_0,
\]
where $V, H$ are Hilbert spaces such that $V$ is continuously and densely embedded in $H$ and the operator $\A(t)\colon V\to V^\prime$ is associated with a 
coercive, bounded, symmetric form $\fra(t,.,.)\colon V\times V \to \C$ for all $t \in [0,T]$.
Given $f \in L^2(0,T;H)$, $u_0\in V$ there exists always a unique solution $u \in \MR(V,V'):= L^2(0,T;V) \cap H^1(0,T;V')$.
The purpose of this article is to investigate whether $u \in H^1(0,T;H)$.
This property of \emph{maximal regularity in $H$} is not known in general. We give a positive answer if the form is of \emph{bounded variation};
i.e., if there exists a bounded and non-decreasing function $g \colon [0,T] \to \R$ such that
\begin{equation*}
	\abs{\fra(t,u,v)- \fra(s,u,v)} \le [g(t)-g(s)] \norm{u}_V \norm{v}_V \quad (s,t \in [0,T], s \le t).
\end{equation*}
In that case, we also show that $u(.)$ is continuous with values in $V$.
Moreover we extend this result to certain perturbations of  $\A(t)$.
\end{abstract}

\bigskip
\noindent  
{\bf Key words:} Sesquilinear forms, non-autonomous evolution equations, maximal regularity.\medskip 

\noindent
\textbf{MSC:} 35K90, 35K50, 35K45, 47D06.

\section{Introduction}
The aim of the present article is to study maximal regularity for evolution equations governed by non-autonomous forms.
More precisely, let $T>0$, let $V, H$ be Hilbert spaces such that $V$ is continuously and densely embedded in $H$ and let
\[
	\fra\colon [0,T] \times V \times V \to \C
\]
be a \emph{non-autonomous form}; i.e., $\fra(t,.,.)$ is sesquilinear for all $t \in [0,T]$ and
$\fra(.,v,w)$ is measurable for all $v,w \in V$.
Moreover we assume that there exists constants $M$ and $\alpha >0$ such that
\begin{equation*}
	\abs{\fra(t,v,w)} \le M \norm{v}_V \norm{w}_V \quad(t \in [0,T], v,w \in V)
\end{equation*}
and 
\begin{equation*}
	\Re \fra(t,v,v) \ge \alpha \norm{v}_V^2 \quad(t \in [0,T], v \in V).
\end{equation*}

Then for $t \in [0,T]$ we may define the \emph{associated operator} $\A(t)\in \L(V,V')$ of $\fra(t,.,.)$ by
\[
	\langle \A(t) v, w \rangle = \fra(t,v,w) \quad (v,w \in V).
\]
Here $V'$ denotes the antidual of $V$ and $\langle ., . \rangle$ denotes the duality between $V'$ and $V$.
Note that $H^1(0,T;V') \hookrightarrow C([0,T];V')$, so we may identify every element of $H^1(0,T;V')$ by its continuous representative.
Now a classical result of Lions (see \cite[p.\ 513]{DL92}, \cite[p. 112]{Sho97}) states the following.
\begin{theorem}\label{thm:Lions}
	For every $f \in L^2(0,T;V')$ and $u_0 \in H$ there exists a unique
	\[
		u \in \MR(V,V'):= L^2(0,T;V) \cap H^1(0,T;V')
	\]
	such that
	\begin{equation}\label{eq:CPinV'}
		\left\{\begin{aligned}
			u' + \A u &= f \quad\text{in }L^2(0,T;V')\\
			u(0)&=u_0.
		\end{aligned} \right.
	\end{equation}
	Moreover $\MR(V,V') \hookrightarrow C([0,T];H)$ and
	\begin{equation}\label{eq:MRV'est}
		\norm{u}_{L^2(0,T;V)}^2 \le \tfrac 1 {\alpha^2} \norm{f}_{L^2(0,T;V')} + \tfrac 1 {\alpha} \norm{u_0}_H^2.
	\end{equation}
\end{theorem}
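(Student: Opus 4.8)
The plan is to obtain $u$ in one stroke from the Lions representation theorem (the variational form of Lax--Milgram that only requires coercivity on a test subspace), rather than by Galerkin approximation. I would work on the Hilbert space $\mathcal{F}:=L^2(0,T;V)$ with the test space
\[
	\Phi:=\{\phi\in H^1(0,T;V):\phi(T)=0\},
\]
equipped with the prehilbertian norm $\abs{\phi}_\Phi^2:=\norm{\phi}_{L^2(0,T;V)}^2+\norm{\phi(0)}_H^2$, and consider
\[
	E(v,\phi):=\int_0^T\fra(t,v(t),\phi(t))\,\d t-\int_0^T(v(t),\phi'(t))_H\,\d t,\qquad \ell(\phi):=\int_0^T\langle f(t),\phi(t)\rangle\,\d t+(u_0,\phi(0))_H.
\]
This is exactly the weak form obtained by pairing \eqref{eq:CPinV'} with $\phi$, integrating in $t$, and integrating $\langle u',\phi\rangle$ by parts, where $\phi(T)=0$ kills the boundary term at $T$ and $\phi(0)$ carries the initial datum.

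First I would verify the two structural hypotheses of Lions' theorem. Continuity of $v\mapsto E(v,\phi)$ on $\mathcal{F}$ for fixed $\phi$ follows from boundedness of $\fra$ and from $V\hookrightarrow H$, which bounds the second integral by $\norm{v}_{L^2(0,T;V)}\norm{\phi'}_{L^2(0,T;H)}$. For coercivity on the diagonal I would use $\Re(\phi(t),\phi'(t))_H=\tfrac12\tfrac{\d}{\d t}\norm{\phi(t)}_H^2$ together with $\phi(T)=0$ to compute
\[
	\Re E(\phi,\phi)=\int_0^T\Re\fra(t,\phi,\phi)\,\d t-\Re\int_0^T(\phi,\phi')_H\,\d t=\int_0^T\Re\fra(t,\phi,\phi)\,\d t+\tfrac12\norm{\phi(0)}_H^2,
\]
and coercivity of $\fra$ then gives $\Re E(\phi,\phi)\ge\min(\alpha,\tfrac12)\abs{\phi}_\Phi^2$. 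Finally $\ell$ is $\abs{\cdot}_\Phi$-continuous: the first term is controlled by $\norm{f}_{L^2(0,T;V')}\norm{\phi}_{L^2(0,T;V)}$ and the second --- and this is the point of the extra term in the norm --- by $\norm{u_0}_H\norm{\phi(0)}_H$, which is exactly where the hypothesis $u_0\in H$ is used. Lions' theorem then yields $u\in\mathcal{F}=L^2(0,T;V)$ with $E(u,\phi)=\ell(\phi)$ for all $\phi\in\Phi$, and the accompanying bound $\norm{u}_{\mathcal{F}}\le\beta^{-1}\norm{\ell}_{\Phi'}$, from which \eqref{eq:MRV'est} drops out after estimating $\norm{\ell}_{\Phi'}$; alternatively the estimate follows from the energy identity obtained once $u\in\MR(V,V')$ is known.

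It remains to read the conclusions off the identity $E(u,\phi)=\ell(\phi)$. Testing only against $\phi\in\Phi$ supported in the open interval $(0,T)$ shows $u'=f-\A u$ in the sense of $V'$-valued distributions; since $\norm{\A(t)u(t)}_{V'}\le M\norm{u(t)}_V$ gives $\A u\in L^2(0,T;V')$, we get $u'\in L^2(0,T;V')$ and hence $u\in\MR(V,V')$. Invoking the standard embedding $\MR(V,V')\hookrightarrow C([0,T];H)$, which rests on the absolute continuity of $t\mapsto\norm{u(t)}_H^2$ and the integration-by-parts formula $\int_0^T\langle u',\phi\rangle=-(u(0),\phi(0))_H-\int_0^T(u,\phi')_H$ valid for $u\in\MR(V,V')$ and $\phi\in\Phi$, I would feed this formula back into $E(u,\phi)=\ell(\phi)$ to obtain $(u(0),\phi(0))_H=(u_0,\phi(0))_H$ for all $\phi\in\Phi$; as $\{\phi(0):\phi\in\Phi\}\supseteq V$ is dense in $H$, this forces $u(0)=u_0$. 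Uniqueness is separate and elementary: a difference $w$ of two solutions satisfies $\tfrac{\d}{\d t}\norm{w}_H^2=2\Re\langle w',w\rangle=-2\Re\fra(t,w,w)\le0$ with $w(0)=0$, whence $w\equiv0$.

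The main obstacle is the choice of the norm $\abs{\cdot}_\Phi$: it must be simultaneously \emph{dominated} by $\Re E(\phi,\phi)$ (for coercivity) and \emph{dominate} $\ell$ (for continuity). The tension is resolved precisely because the coercivity computation manufactures the boundary term $\tfrac12\norm{\phi(0)}_H^2$ out of $-\Re\int(\phi,\phi')_H$ with the correct sign --- which is exactly why one imposes $\phi(T)=0$ rather than $\phi(0)=0$ on the test space --- so that the two quantities $\norm{\phi}_{L^2(0,T;V)}$ and $\norm{\phi(0)}_H$ that control $\ell$ are the same two quantities that $\Re E(\phi,\phi)$ controls. The secondary technical point I would treat with some care, though I expect it to be routine, is the embedding $\MR(V,V')\hookrightarrow C([0,T];H)$ and its integration-by-parts formula, since the statement $u(0)=u_0$ only becomes meaningful once $u$ has a continuous $H$-valued representative.
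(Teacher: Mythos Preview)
The paper does not prove this theorem; it is quoted as a classical result with references to Dautray--Lions and Showalter. Your argument via Lions' representation theorem is correct and is in fact the approach taken in one of the cited sources (Showalter), so there is nothing to compare against in the paper itself. Two small remarks: the Lions-theorem bound $\norm{u}_{\mathcal F}\le\beta^{-1}\norm{\ell}_{\Phi'}$ with $\beta=\min(\alpha,\tfrac12)$ does not directly yield the constants in \eqref{eq:MRV'est}, so you are right to fall back on the energy identity once $u\in\MR(V,V')$ is established; and the stated inequality \eqref{eq:MRV'est} appears to be missing a square on $\norm{f}_{L^2(0,T;V')}$, which the energy argument makes clear.
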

Let $f \in L^2(0,T;H)$, $u_0 = 0$ and let $u \in \MR(V,V')$ be the solution of \eqref{eq:CPinV'}.
In the autonomous case; i.e., if $\fra(t,.,.) = \fra(0,.,.)$ for all $t \in [0,T]$, it is well known that $u$ is already in $H^1(0,T;H)$.
Thus the question arises whether $u$ is in $H^1(0,T;H)$ also in the non-autonomous case.
This question seems still to be open and was explicitly asked by Lions \cite[p.\ 68]{Lio61} in the case that $\fra(t,.,.)$ is symmetric for all $t \in [0,T]$.
We say that $\fra$ has \emph{maximal regularity} in $H$ if 
for all $f \in L^2(0,T;H)$ and $u_0=0$ the solution $u$ of \eqref{eq:CPinV'} is in $H^1(0,T;H)$,
and consequently in
\[
	\MR_\fra(H) := \{ u \in L^2(0,T;V) \cap H^1(0,T;H) : \A u \in L^2(0,T;H) \}.
\]
It is easy to see that $\fra$ has maximal regularity in $H$ implies that the solution $u$ of \eqref{eq:CPinV'} is in $H^1(0,T;H)$ for every $f \in L^2(0,T;H)$ and $u_0 \in Tr_\fra$,
where $Tr_\fra := \{v(0) : v \in \MR_\fra(H)\}$.

In the present article the contribution to this question is the following.
Assume additionally that $\fra(t,.,.)$ is symmetric for all $t \in [0,T]$
and of \emph{bounded variation}; i.e., there exists a bounded and non-decreasing function $g \colon [0,T] \to \R$ such that
\begin{equation*}
	\abs{\fra(t,v,w)- \fra(s,v,w)} \le [g(t)-g(s)] \norm{v}_V \norm{w}_V \quad (0\le s \le t \le T, v,w \in V).
\end{equation*}
Then $\fra$ has maximal regularity in $H$ and $Tr_\fra= V$. Moreover $\MR_\fra(H)$ is continuously embedded in $C([0,T];V)$ (see Theorem~\ref{thm:MRp1}).
The fact that the solution is continuous with values in $V$ is not obvious at all and plays a central role in the following results.
In Theorem~\ref{thm:MR} we extend this regularity result to certain perturbations of $\A$, including multiplicative perturbations 
(see Corollary~\ref{cor:pert}).
We obtain this result by establishing refined product rules for functions in the maximal regularity space
\(
	\MR_\fra(H),
\)
which are of independent interest.
For example to obtain a priori estimates for semilinear or quasilinear problems (see \eqref{eq:NLCP}).

The question of $H$-maximal regularity is important for several reasons.
First of all, if Robin boundary conditions are considered, only the operator $A(t)$ associated to $\fra(t,.,.)$ on $H$ realizes these boundary conditions.
The main reason for studying this problem is the importance for non-linear problems.
They are mainly solved by applying the Banach or the Schauder fixed point Theorem.
For that a suitable invariant space is needed and this may be the space $\MR_\fra(H)$ if maximal regularity in $H$ is valid.
In addition, if the injection of $V$ in $H$ is compact, then the injection of $\MR_\fra(H)$ in $L^2(0,T;H)$ is compact (see Theorem~\ref{thm:A-L}).
This allows one to use Schauder's or more appropriately Schaefer's fixed point theorem.
This had be done in this context in \cite{AC10},
where an isotropic quasilinear parabolic problem of the form
\begin{equation*}
		\left\{  \begin{aligned}
          		&u'  + m(t,x,u,\nabla u) \A u=  f(t,x,u,\nabla u)\\
                       &u(0) =u_0,
		\end{aligned} \right.
\end{equation*}
where $\A$ is a time independent operator, was investigated.
With our new results we now obtain an analogous result for  time dependent $\A$.
For this we need a non-autonomous Aubin--Lions lemma which we prove in Section~6.

We now comment on the relation of our investigations with known results.
Our results improve the results of \cite{ADLO14} where Lipschitz continuity of $\fra(.,v,w)$ for all $v,w \in V$ was assumed
whereas we only need bounded variation. 
On the other hand we restrict ourselves to symmetric forms whereas \cite{ADLO14} only the uniform square root property was assumed.
The method we use here is completely different than the one of \cite{ADLO14},
where a suitable similarity transformation is used which allows one to reduce the problem to Lions' result Theorem~\ref{thm:Lions}. 

Lions himself proved maximal regularity in $H$ if $\fra(t,.,.)$ is symmetric for all $t \in [0,T]$ and $\fra(.,v,w) \in C^1([0,T])$ for all $v,w \in V$ (see \cite[p.\ 65]{Lio61}).
He also proved the following: if $f \in H^1(0,T;H)$ and $u_0 \in D(A(0))$, $\fra(t,.,.)$ is symmetric for all $t \in [0,T]$ and $\fra(.,v,w) \in C^2([0,T])$ for all $v,w \in V$, then the solution $u$ of \eqref{eq:CPinV'} is in $H^1(0,T;H)$ (see \cite[p.\ 94]{Lio61}).

Bardos generalized Lions' result in \cite{Bar71}. He proved maximal regularity in $H$ and $Tr_\fra=V$, 
under the assumptions that the domains of both $A(t)^{1/2}$ and 
$A(t)^{*1/2}$ coincide with $V$ as spaces and topologically with constants independent of $t$,
and that $\A(.)^{1/2}$ is continuously differentiable with values in $\L(V,V')$.

With a different approach, maximal regularity in $H$ was shown in \cite{OS10}, if there exist some constants $L$ and $\alpha > \frac 1 2$ such that
\[
	\abs{\fra(t,v,w)-\fra(s,v,w)} \le L \abs{t-s}^\alpha \norm{v}_V \norm{w}_V \quad (s,t \in [0,T], v,w \in V).
\]
This result was improved in \cite{HO14} in the following way. If $\fra$ satisfies some ``Dini'' condition, which is a generalization of the Hölder continuity above, 
then $\fra$ has maximal regularity in $H$ and $Tr_\fra = D(A(0)^{1/2})$.

More recent further contributions to maximal regularity for non-autonomous problems are
\cite{ADO14}, \cite{ADK14}, \cite{ACFP07}, \cite{SP01}, \cite{Ama04}.

The article is organized as follows.
Section~2 has preliminary character. There we give precise definitions and introduce some notation.
The tool kit (Proposition~\ref{prop:Lip_cont} and Proposition~\ref{prop:BV}) for the main results is produced in Section 3.
In Section~4 we obtain Theorem~\ref{thm:MRp1}  by regularization of the form in time.
A perturbation result in Section~5 will broaden the spectrum of applications.
In Section~6 we prove an Aubin--Lions lemma.
We illustrate our abstract results in Section~7 by some applications to elliptic operators and show existence for a quasi-linear problem.
\section*{Acknowledgment}
The author thanks Wolfgang Arendt for many helpful discussions. 
He gratefully acknowledges the financial support of the DFG Graduate School 1100: Modeling, Analysis and Simulation in Economathematics.

\section{Non-autonomous forms}\label{sec:forms}
Let $\K$ be the field $\R$ or $\C$ and let $V, H$ be Hilbert spaces over $\K$,
such that $V \overset d \hookrightarrow H$; i.e., $V$ is continuously and densely embedded in $H$. 
Then there exists a constant $c_H$ such that
\begin{equation}\label{eq:VinH}
	\norm{v}_H \le c_H \norm{v}_V \quad (v \in V).
\end{equation}
We denote by $V'$ the antidual (or dual if $\K=\R$) of $V$, and by $\langle ., .\rangle$ the duality between $V'$ and $V$. Furthermore we embed $H$ into $V'$ by the mapping
\[
	u \mapsto \left[ v \mapsto (u \mid v)_H \right].
\]
Then $(u \mid v)_H = \langle u, v \rangle$ for all $u \in H$ and $v \in V$, $H$ is dense in $V'$ and 
\[
	\norm{u}_{V'} \le c_H \norm u_H \quad (u \in H)
\]
where $c_H$ is the same constant as in \eqref{eq:VinH}.

Let $T>0$. The mapping
\[
	\fra \colon [0,T] \times V \times V \to \K
\]
is called a \emph{non-autonomous form} if $\fra(t,.,.)$ is sesquilinear for all $t \in [0,T]$ and
$\fra(.,u,v)$ is measurable for all $u,v \in V$.

We say the non-autonomous form $\fra$ is \emph{$V$-bounded} if there exists a constant $M$ such that
\begin{equation}\label{eq:Vbounded}
	\abs{\fra(t,v,w)} \le M \norm v_V \norm w_V \quad (t \in [0,T], v,w \in V),
\end{equation}
and \emph{coercive} if there exists an $\alpha >0$ such that
\begin{equation}\label{eq:coercive}
	\Re \fra(t,v,v) \ge \alpha\norm{v}_V^2 \quad (t \in [0,T], v \in V).
\end{equation}
A non-autonomous form $\fra$ is called \emph{symmetric} if
\[
	\fra(t,v,w) = \overline{\fra(t,w,v)} \quad (t \in [0,T], v,w \in V).
\]
Furthermore we say the non-autonomous form $\fra$ is \emph{Lipschitz continuous} if there exists a constant $L$ such that
\begin{equation}\label{eq:formderivative}
	\abs{\fra(t,v,w)- \fra(s,v,w)} \le L \abs{t-s} \norm{v}_V \norm{w}_V \quad (t,s\in [0,T], v,w \in V)
\end{equation}
and of \emph{bounded variation} if
there exists a bounded and non-decreasing function $g \colon [0,T]\to \R$ such that
\begin{equation}\label{eq:Mpunkt}
	\abs{\fra(t,v,w)- \fra(s,v,w)} \le [g(t)-g(s)] \norm{v}_V \norm{w}_V \quad (0\le s \le t \le T, v,w \in V). 
\end{equation}

Let $\fra$ be a $V$-bounded and coercive non-autonomous form. Then for fixed $t \in [0,T]$ there exists an invertible operator $\A(t) \in \L(V,V')$ such that
\[
	\langle \A(t)u, v \rangle = \fra(t,u,v) \quad (u,v \in V)
\]
by \eqref{eq:Vbounded}, \eqref{eq:coercive} and the Lax--Milgram theorem.
We consider $\A$ as the multiplication operator from $L^2(0,T;V)$ to $L^2(0,T;V')$ and say that $\A$ is the \emph{associated operator} of $\fra$, or $\A \sim \fra$.
Further we define the \emph{maximal regularity space} of $\fra$ by
\[
	\MR_\fra(H) := \{ u \in L^2(0,T;V) \cap H^1(0,T;H) : \A u \in L^2(0,T;H) \},
\]
equipped with the norm
\[
	\norm{u}_{\MR_\fra(H)}^2 := \norm{\A u}_{L^2(0,T;H)}^2 + \norm{u'}_{L^2(0,T;H)}^2.
\]
Note that $\MR_\fra(H)$ is a Hilbert space and $\MR_\fra(H)$ is continuously embedded in $L^2(0,T;V)$.
Moreover for $u\in \MR_\fra(H)$ we always identify $u$ by its continuous version on $C([0,T];V)$.

Let $\fra$ be a $V$-bounded, coercive non-autonomous form of bounded variation, where $g \colon [0,T]\to \R$ is bounded and non-decreasing such that \eqref{eq:Mpunkt} holds.
Then we may define the right-continuous versions $g^+$ and $\fra^+$ of $g$ and $\fra$ (here we set $g^+(T)=\lim_{t\uparrow T}g(t)$ and $\fra^+(T,.,.)=\lim_{t\uparrow T}\fra(t,.,.)$)
and the left-continuous versions $g^-$ and $\fra^-$ of $g$ and $\fra$ (here we set $g^-(0)=\lim_{t\downarrow 0}g(t)$ and $\fra^-(0,.,.)=\lim_{t\downarrow 0}\fra(t,.,.)$).
Then $\MR_\fra(H) = \MR_{\fra^+}(H) =\MR_{\fra^-}(H)$ and if $\A\sim \fra$, $\A^+\sim \fra^+$ and $\A^- \sim \fra^-$ we have $\A u = \A^+ u = \A^- u$ in $L^2(0,T;V')$ for all $u \in L^2(0,T;V)$.

Finally we denote by $\mu_g$ the unique Borel measure on $[0,T]$ which is defined by
\[
	\mu_g((a,b]) = g^+(b) -g^+(a) \quad (0 \le a < b \le T).
\]

\section{A differentiation formula}\label{sec:diffformula}
Let $V, H$ be Hilbert spaces over the field $\K$ such that $V \overset d \hookrightarrow H$.
Furthermore let
\[
	\fra \colon [0,T]\times V\times V \to \K
\]
be a symmetric, $V$-bounded, coercive non-autonomous form and $\A \sim \fra$.
The purpose of this section is to obtain properties of the maximal regularity space $\MR_\fra(H)$ 
and the function $t \mapsto \fra(t,u(t),u(t))$ for $u \in \MR_\fra(H)$. 

\begin{proposition}\label{prop:diff_formula}
	Let $u\in \MR_\fra(H)$. Then
	\begin{multline*}
		- \int _0^T \fra(t,u,u) \varphi' \ \d{t} = \int_0^T 2 \Re (\A u\mid u')_H \varphi \ \d{t}\\
			+ \lim_{h\to 0} \frac 1 h \int_0^T \left[\fra(t+h,u(t+h),u(t))-\fra(t,u(t+h),u(t))\right] \varphi(t) \ \d{t}
	\end{multline*}
	for every $\varphi \in C_c^1(0,T)$.
\end{proposition}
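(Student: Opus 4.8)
The plan is to read the left-hand side as (minus) the distributional derivative of $F(t) := \fra(t,u(t),u(t))$ tested against $\varphi'$, and to evaluate it through forward difference quotients. Since $u\in L^2(0,T;V)$ and $\fra$ is $V$-bounded we have $\abs{F(t)}\le M\norm{u(t)}_V^2$, so $F\in L^1(0,T)$. For $\varphi\in C_c^1(0,T)$ the support is a compact subinterval of $(0,T)$, so all shifts by small $h$ stay inside $(0,T)$; a change of variables then gives
\[
-\int_0^T F\varphi'\,\d t = \lim_{h\to0}\frac 1h\int_0^T\bigl[F(t+h)-F(t)\bigr]\varphi(t)\,\d t .
\]
Here I use that $\tfrac{\varphi(\cdot-h)-\varphi(\cdot)}{h}\to-\varphi'$ uniformly while $F\in L^1$.

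Next I would telescope the increment. Writing $u_h:=u(t+h)$ and $u_0:=u(t)$ and inserting two intermediate terms,
\[
F(t+h)-F(t) = \fra(t+h,u_h,u_h-u_0) + \fra(t,u_h-u_0,u_0) + \bigl[\fra(t+h,u_h,u_0)-\fra(t,u_h,u_0)\bigr].
\]
The last bracket is precisely the time-increment of the form at $(u_h,u_0)$ occurring on the right-hand side, so after dividing by $h$, multiplying by $\varphi$ and integrating it yields the stated limit. It therefore remains to show that the contributions of the first two terms converge to $\int_0^T 2\Re(\A u\mid u')_H\,\varphi\,\d t$.

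For this I would use that $\A(s)u(s)=(\A u)(s)\in H$ for a.e.\ $s$, so that $\fra(s,u(s),w)=((\A u)(s)\mid w)_H$ for all $w\in V$, together with the symmetry of $\fra$. This rewrites the first term as $\bigl((\A u)(t+h)\mid\tfrac{u(t+h)-u(t)}{h}\bigr)_H$ and, by symmetry, the second as $\overline{\bigl((\A u)(t)\mid\tfrac{u(t+h)-u(t)}{h}\bigr)_H}$. Since $u\in H^1(0,T;H)$, the difference quotients $\tfrac{u(\cdot+h)-u(\cdot)}{h}$ converge to $u'$ in $L^2(0,T;H)$, while $\A u\in L^2(0,T;H)$ and its translate $(\A u)(\cdot+h)$ converges to $\A u$ in $L^2(0,T;H)$ by $L^2$-continuity of translation. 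Splitting $(a_h\mid b_h)-(a\mid b)=(a_h-a\mid b_h)+(a\mid b_h-b)$ and applying Cauchy--Schwarz with $\varphi$ bounded, both integrals pass to the limit, giving $\int_0^T(\A u\mid u')_H\,\varphi\,\d t$ and its conjugate, whose sum is $\int_0^T 2\Re(\A u\mid u')_H\,\varphi\,\d t$. Since the total difference-quotient limit exists and these two pieces converge, the remaining time-increment term converges as well, and rearranging produces the asserted identity.

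The main obstacle is the step just described: tracking which argument of the form carries the translated variable, so that symmetry produces a term together with its conjugate — hence a genuine real part — rather than two mismatched inner products. The fact that both factors converge \emph{strongly} in $L^2(0,T;H)$, one by the $H^1$-regularity of $u$ and the other by translation continuity of $\A u$, is exactly what lets the product pass to the limit; and the compact support of $\varphi$ is essential to avoid boundary contributions when shifting.
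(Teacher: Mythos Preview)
Your proposal is correct and follows essentially the same route as the paper's proof: the same change of variables to pass from $\varphi'$ to a difference quotient of $F$, the same three-term telescoping of $\fra(t+h,u_h,u_h)-\fra(t,u_0,u_0)$, and the same passage to the limit using $\A u\in L^2(0,T;H)$ together with the $H^1(0,T;H)$-regularity of $u$. The only cosmetic difference is that the paper writes $u(t+h)-u(t)=\int_t^{t+h}u'(s)\,\d s$ before taking the limit, whereas you work directly with the difference quotient; both give the required $L^2(0,T;H)$-convergence to $u'$.
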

\begin{proof}
	Let $u\in \MR_\fra(H)$, $\varphi \in C_c^1(0,T)$ and $h \in \R$ such that $\supp(\varphi) \pm h \subset [0,T]$.
	Then
	\begin{align*}
		&-\int_0^T \fra(t,u(t),u(t)) \frac{\varphi(t-h)-\varphi(t)}{-h} \ \d{t}\\
		&= \int_0^T \frac 1 h \left[ \fra(t+h,u(t+h),u(t+h))-\fra(t,u(t),u(t)) \right] \varphi(t) \ \d{t}\\
		&= \int_0^T \frac 1 h [ \fra(t+h,u(t+h),u(t+h)-u(t))+ \fra(t,u(t+h)-u(t),u(t))\\
			 &\quad\qquad+ \fra(t+h,u(t+h),u(t)) - \fra(t,u(t+h),u(t))  ] \varphi(t) \ \d{t}\\
		&= \int_0^T  \big[ (\A(t+h)u(t+h) \mid \tfrac 1 h \textstyle\int_t^{t+h} u'(s) \, \d{s})_H\\
		 	&\quad\qquad+ (\tfrac 1 h \textstyle\int_t^{t+h} u'(s) \, \d{s} \mid \A(t)u(t) )_H \big] \varphi(t) \ \d{t}\\
			 &\quad+ \frac 1 h \int_0^T \big[\fra(t+h,u(t+h),u(t)) - \fra(t,u(t+h),u(t))  \big] \varphi(t) \ \d{t}.
	\end{align*}
	Since $u \in L^2(0,T;V)$ we have by \eqref{eq:Vbounded} that $\fra(.,u,u) \in L^1(0,T)$. 
	Hence the left hand side converges to
	$- \int _0^T \fra(t,u,u) \varphi' \, \d{t}$ as $h\to 0$.
	Moreover, since $\A u \in L^2(0,T;H)$ we have $\A u(.+h) \to \A u$ in $L^2(0,T;H)$ as $h \to 0$ 
	and since $u \in H^1(0,T;H)$ we have $\frac 1 h \int_.^{.+h} u'(s) \, \d{s} \to u'$ in $L^2(0,T;H)$ as $h \to 0$.
	Thus the first integral on the right hand side converges to $\int_0^T 2 \Re (\A u\mid u')_H \varphi \, \d{t}$.
\end{proof}
Before we come to the main results of this section we need the following three lemmas.

\begin{lemma}\label{lem:weak_cont}
	Let $u \in C(0,T;H) \cap L^\infty(0,T;V)$. Then $u(.)$ is weakly continuous in $V$.
\end{lemma}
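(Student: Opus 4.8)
The plan is to fix an arbitrary $t_0 \in [0,T]$, take a sequence $t_n \to t_0$, and show that $u(t_n) \rightharpoonup u(t_0)$ weakly in $V$; weak continuity of $u(\cdot)$ then follows because $t_0$ and the sequence were arbitrary. Write $M := \operatorname{ess\,sup}_{t} \norm{u(t)}_V < \infty$. The only structural fact I need about the pair $(V,H)$ is that the embedding $V \overset d \hookrightarrow H$, being continuous and linear, is weak-to-weak continuous, so that weak convergence in $V$ implies weak convergence in $H$; combined with uniqueness of weak limits in $H$, this will let me identify any weak $V$-limit of the $u(t_n)$ with the strong $H$-limit supplied by $u \in C(0,T;H)$.

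First I would upgrade the essential bound to a genuine pointwise bound, namely $u(t) \in V$ with $\norm{u(t)}_V \le M$ for \emph{every} $t \in [0,T]$. Fix $t$ and pick $s_n \to t$ with $\norm{u(s_n)}_V \le M$, which is possible since $\norm{u(\cdot)}_V \le M$ holds almost everywhere, hence on a dense set. As $V$ is a Hilbert space it is reflexive, so after passing to a subsequence $u(s_n) \rightharpoonup w$ in $V$ for some $w \in V$ with $\norm{w}_V \le M$. By the remark above $u(s_n) \rightharpoonup w$ also weakly in $H$, while $u(s_n) \to u(t)$ strongly, hence weakly, in $H$ by continuity. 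Uniqueness of the weak $H$-limit forces $w = u(t)$, proving the claim.

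With the pointwise bound at hand I would establish weak $V$-continuity at $t_0$ by the standard subsequence principle. The sequence $(u(t_n))_n$ is bounded in $V$ by $M$, so, given any subsequence, reflexivity yields a further subsequence converging weakly in $V$ to some limit which—exactly as in the previous paragraph, by comparison with the strong $H$-limit $u(t_0)$—must equal $u(t_0)$. Consequently, for each fixed $\phi \in V'$ the scalar sequence $\langle \phi, u(t_n)\rangle$ has the property that every one of its subsequences admits a further subsequence converging to $\langle \phi, u(t_0)\rangle$; hence the full sequence converges to $\langle \phi, u(t_0)\rangle$. Since $\phi \in V'$ was arbitrary, $u(t_n) \rightharpoonup u(t_0)$ in $V$.

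The one genuinely delicate point, which I would treat with care, is the identification of the weak $V$-limit with $u(t_0)$, as this is the sole place where the two topologies interact. It relies on the continuous dense embedding $V \overset d \hookrightarrow H$ (so that every $\psi \in H'$ restricts to an element of $V'$, which is what makes the inclusion weak-to-weak continuous) together with the fact that $H$-valued continuity of $u$ pins the limit down unambiguously. Everything else is soft: reflexivity of the Hilbert space $V$ gives weak compactness of bounded sequences, and the elementary subsequence principle promotes subsequential weak convergence to weak convergence of the entire sequence.
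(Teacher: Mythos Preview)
Your proof is correct and follows essentially the same approach as the paper: first upgrade the essential $V$-bound to a genuine pointwise bound by approximating each $t$ through points where the bound holds, identifying the weak $V$-limit with $u(t)$ via the strong $H$-continuity; then use boundedness plus reflexivity and the subsequence principle to obtain weak $V$-continuity at every point. The paper's proof is simply a more compressed version of yours.
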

\begin{proof}
	Let $N \subset [0,T]$ be a null set such that $\norm{u(t)}_V \le \norm{u}_{L^\infty(0,T;V)}$ holds for all $t\in [0,T]\setminus N$.
Let $t \in [0,T]$ and $(t_n) \subset [0,T]\setminus N$ with $t_n \to t$. 
Since $(u(t_n))$ is bounded in $V$, every subsequence of $(u(t_n))$ has a weakly convergent subsequence converging to some $v$ in $V$.
Since $u(.)$ is continuous in $H$ we obtain $v = u(t)$, hence $u(t_n) \rightharpoonup u(t)$ as $n \to \infty$.
This implies $\norm{u(t)}_V \le \norm{u}_{L^\infty(0,T;V)}$ for all $t\in [0,T]$. Hence we can choose $N = \emptyset$ and therefore $u(.)$ is weakly continuous in $V$.
\end{proof}

\begin{lemma}\label{lem:Lebesguepoint}
	Suppose $\fra$ is additionally right-continuous (left-continuous); i.e., for every $t \in [0,T)$ ($t \in (0,T]$), $\varepsilon >0$ there exists $\delta >0$ such that
	\begin{equation}\label{eq:form_cont}
		\abs{\fra(t,v,w) - \fra(s,v,w)} \le \varepsilon \norm{v}_V \norm{w}_V \quad (v,w \in V)
	\end{equation}
	for all $s \in [0,T]$ with $t \le s \le t +\delta$ ($t-\delta \le s \le t$). Then for $u \in \MR_\fra(H) \cap L^\infty(0,T;V)$
	\[
		\frac 1 h \int_t^{t+h} \abs{\fra(s,u(s),u(s))- \fra(t,u(t),u(t))} \ \d{s} \to 0
	\]
	as $h \downarrow 0$ ($h \uparrow 0$).
\end{lemma}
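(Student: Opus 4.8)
The plan is to prove the right-continuous case; the left-continuous case follows by reflecting time. Fix $t$ and abbreviate $C:=\norm{u}_{L^\infty(0,T;V)}$. Since $u\in\MR_\fra(H)\subset H^1(0,T;H)$ is continuous with values in $H$ and lies in $L^\infty(0,T;V)$, Lemma~\ref{lem:weak_cont} applies, so $u$ is weakly continuous in $V$ and $\norm{u(r)}_V\le C$ for every $r$. The starting point is the triangle-inequality split of the integrand into a part in which only the form varies and a part in which only the argument varies,
\[
	\abs{\fra(s,u(s),u(s))-\fra(t,u(t),u(t))}\le \abs{\fra(s,u(s),u(s))-\fra(t,u(s),u(s))}+\abs{\fra(t,u(s),u(s))-\fra(t,u(t),u(t))}.
\]

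For the first term I would invoke the right-continuity assumption \eqref{eq:form_cont}: given $\varepsilon>0$, choose $\delta>0$ so that for $t\le s\le t+\delta$ this term is at most $\varepsilon\norm{u(s)}_V^2\le\varepsilon C^2$. Hence its average over $(t,t+h]$ is bounded by $\varepsilon C^2$ for all $0<h\le\delta$, which is harmless since $\varepsilon$ is arbitrary.

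The second term involves the fixed, symmetric, bounded form $\fra(t,.,.)=\langle\A(t).,.\rangle$. Expanding by sesquilinearity and symmetry gives
\[
	\fra(t,u(s),u(s))-\fra(t,u(t),u(t))=\langle\A(t)u(s),u(s)-u(t)\rangle+\overline{\langle\A(t)u(t),u(s)-u(t)\rangle}.
\]
The last summand is soft: $\A(t)u(t)$ is a fixed functional in $V'$ and $u(s)-u(t)\rightharpoonup0$ weakly in $V$ as $s\downarrow t$, so $\langle\A(t)u(t),u(s)-u(t)\rangle\to0$; as the supremum of its modulus over $(t,t+h]$ dominates its average, this contribution vanishes as $h\downarrow0$. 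For the first summand I would write $\A(t)u(s)=\A(s)u(s)+[\A(t)-\A(s)]u(s)$. By \eqref{eq:form_cont} the correction satisfies $\norm{[\A(t)-\A(s)]u(s)}_{V'}\le\varepsilon\norm{u(s)}_V$ for $t\le s\le t+\delta$, so it contributes at most $\varepsilon\norm{u(s)}_V\norm{u(s)-u(t)}_V\le 2\varepsilon C^2$ to the average — again harmless.

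This leaves the genuinely delicate term $(\A(s)u(s)\mid u(s)-u(t))_H$, where I used that $\A u\in L^2(0,T;H)$, so $\A(s)u(s)\in H$ for almost every $s$ and the duality collapses to the $H$-inner product. Bounding it by $\norm{\A(s)u(s)}_H\,\norm{u(s)-u(t)}_H$ and setting $\omega(h):=\sup_{s\in(t,t+h]}\norm{u(s)-u(t)}_H$, which tends to $0$ by continuity of $u$ in $H$, the average is controlled by
\[
	\omega(h)\cdot\frac1h\int_t^{t+h}\norm{\A(s)u(s)}_H\ \d{s}.
\]
Here lies the main obstacle, and the reason for the lemma's name: the remaining average need not stay bounded as $h\downarrow0$ for an arbitrary $t$, but since $\norm{\A(.)u(.)}_H\in L^1(0,T)$, at every right Lebesgue point of this function — hence for almost every $t$ — it converges to $\norm{\A(t)u(t)}_H<\infty$, so the product with $\omega(h)$ vanishes. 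Collecting the four estimates yields $\limsup_{h\downarrow0}\frac1h\int_t^{t+h}\abs{\fra(s,u(s),u(s))-\fra(t,u(t),u(t))}\,\d{s}\le 3\varepsilon C^2$, and letting $\varepsilon\downarrow0$ completes the proof. The delicate term is the only place where more than the uniform form bounds and weak continuity enter, so I expect it to be the crux; the other three pieces are controlled purely by one-sided continuity of $\fra$ and by weak continuity in $V$.
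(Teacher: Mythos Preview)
Your decomposition and the handling of the three ``soft'' pieces are fine and essentially match the paper (which splits the integrand into $\abs{\fra(s,u(s),u(s)-u(t))}+\abs{\fra(t,u(s)-u(t),u(t))}+\abs{\fra(s,u(s),u(t))-\fra(t,u(s),u(t))}$, a minor rearrangement of yours). The problem is the ``delicate term'': by invoking Lebesgue points of $\norm{\A(.)u(.)}_H$ you only obtain the conclusion for \emph{almost every} $t$, whereas the lemma asserts it for \emph{every} $t$ (this is why the continuous $H$-representative of $u$ is singled out, and it is needed in Propositions~\ref{prop:Lip_cont} and \ref{prop:BV}, where the identity is applied at specific endpoints after letting a test function approach an indicator).

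The fix is to exploit that $u\in H^1(0,T;H)$, not merely $C([0,T];H)$: write $u(s)-u(t)=\int_t^s u'(r)\,\d r$ in $H$, so for $s\in(t,t+h]$ one has $\norm{u(s)-u(t)}_H\le h^{1/2}\bigl(\int_t^{t+h}\norm{u'}_H^2\bigr)^{1/2}$. Combined with $\frac1h\int_t^{t+h}\norm{\A u}_H\le h^{-1/2}\bigl(\int_t^{t+h}\norm{\A u}_H^2\bigr)^{1/2}$ (Cauchy--Schwarz), your bound $\omega(h)\cdot\frac1h\int_t^{t+h}\norm{\A u}_H$ is dominated by
\[
	\Bigl(\int_t^{t+h}\norm{\A u}_H^2\ \d s\Bigr)^{1/2}\Bigl(\int_t^{t+h}\norm{u'}_H^2\ \d s\Bigr)^{1/2},
\]
which tends to $0$ for \emph{every} $t$ since both integrands are in $L^1(0,T)$. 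This is exactly the paper's estimate for its term $\abs{\fra(s,u(s),u(s)-u(t))}=\abs{(\A(s)u(s)\mid u(s)-u(t))_H}$. With this replacement your proof goes through at every $t$, and the Lebesgue-point argument is unnecessary.
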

Note that for $u \in \MR_\fra(H)$ we always consider the representative which is in $C([0,T];H)$.
\begin{proof}
	We prove the statement for the case that $\fra$ is right-continuous, the other case is similar.
	Let $u \in \MR_\fra(H) \cap L^\infty(0,T;V)$, $\varepsilon >0$ and $t \in [0,T)$. 
	Since $\fra$ is right-continuous, there exists $\delta >0$ such that \eqref{eq:form_cont} holds for all $t \le s \le t +\delta$.
	Thus for $0<h<\min\{\delta, T-t\}$
	\begin{align*}
		&\frac 1 h \int_t^{t+h} \abs{\fra(s,u(s),u(s))- \fra(t,u(t),u(t))} \ \d{s}\\
		&\le\frac 1 h \int_t^{t+h} \abs{\fra(s,u(s),u(s)-u(t))}+ \abs{\fra(t,u(s)-u(t),u(t))}\\
		&\quad\qquad + \abs{\fra(s,u(s),u(t))- \fra(t,u(s),u(t))}  \ \d{s}\\
		&\le \frac 1 h \int_t^{t+h} \int_t^s \abs{(\A(s)u(s) \mid u'(r))_H} \ \d{r} \ \d{s}
			+\frac 1 h \int_t^{t+h} \abs{\fra(t,u(s)-u(t),u(t))} \ \d{s}\\
		&\quad + \varepsilon \frac 1 h \int_t^{t+h} \norm{u(s)}_V\norm{u(t)}_V \ \d{s}\\
		&\le \frac 1 h \int_t^{t+h} \norm{\A(s)u(s)}_H \ \d{s} \int_t^{t+h} \norm{u'(r)}_H \ \d{r} \\
		&\quad +\frac 1 h \int_t^{t+h} \abs{\fra(t,u(s)-u(t),u(t))} \ \d{s}+ \varepsilon \norm{u}_{L^\infty(0,T;V)}^2.
	\end{align*}
	Finally, the second line from below is dominated by 
	\[
		\left( \int_t^{t+h} \norm{\A(s)u(s)}_H^2 \ \d{s} \int_t^{t+h} \norm{u'(r)}_H^2 \ \d{r} \right)^{1/2}
	\]
	by Hölder's inequality. Moreover, the function $s \mapsto \abs{\fra(t,u(s)-u(t),u(t))}$ is continuous, since $u(.)$ is weakly continuous in $V$ by Lemma~\ref{lem:weak_cont}.
	Hence taking the limit $h \downarrow 0$ proves the claim.
\end{proof}

\begin{lemma}\label{lem:nullset}
	Suppose $\fra$ is additionally Lipschitz continuous and $\tilde V$ is a separable subspace of $V$. 
	Then there exists a Lebesgue null set $N \subset [0,T]$ such that 
	$\lim_{h\to 0} \frac 1 h [\fra(t+h,v,v)-\fra(t,v,v)]$ exists for all $t \in [0,T] \setminus N$ and all $v \in \tilde V$.
\end{lemma}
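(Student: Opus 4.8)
The plan is to reduce to the classical fact that a scalar Lipschitz function on an interval is differentiable almost everywhere, and then to upgrade the resulting pointwise statement from a countable dense set to all of $\tilde V$ by exploiting the bound on difference quotients coming from Lipschitz continuity of $\fra$, which is uniform in $h$.

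First I would fix a countable dense subset $D = \{v_n : n \in \N\}$ of $\tilde V$, which exists since $\tilde V$ is separable. For each $n$ the scalar function $t \mapsto \fra(t, v_n, v_n)$ is Lipschitz continuous on $[0,T]$ by \eqref{eq:formderivative}, with constant $L \norm{v_n}_V^2$, hence differentiable outside a Lebesgue null set $N_n$. Setting $N := \bigcup_{n} N_n$, the set $N$ is again null, and for every $t \in [0,T] \setminus N$ and every $n$ the limit
\[
	q(t, v_n) := \lim_{h \to 0} \tfrac 1 h \left[\fra(t+h, v_n, v_n) - \fra(t, v_n, v_n)\right]
\]
exists.

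The key step is to pass from $D$ to all of $\tilde V$. For $t \in [0,T]$, $v,w \in V$ and admissible $h$, write $q_h(t, v) := \frac 1 h [\fra(t+h,v,v) - \fra(t,v,v)]$ and set $D_h := \fra(t+h,.,.) - \fra(t,.,.)$, a sesquilinear form with $\abs{D_h(x,y)} \le L\abs{h}\norm{x}_V\norm{y}_V$ by \eqref{eq:formderivative}. Expanding $D_h(v,v) - D_h(w,w) = D_h(v,v-w) + D_h(v-w,w)$ gives the bound
\[
	\abs{q_h(t,v) - q_h(t,w)} \le L \left(\norm{v}_V + \norm{w}_V\right) \norm{v-w}_V,
\]
which is uniform in $h$. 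Consequently, for $t \notin N$ the map $v_n \mapsto q(t,v_n)$ satisfies the same estimate on $D$, is uniformly continuous on bounded subsets of $D$, and therefore extends uniquely to a function $q(t,.)$ on $\tilde V$ obeying $\abs{q(t,v) - q(t,w)} \le L(\norm{v}_V + \norm{w}_V)\norm{v-w}_V$. Then, fixing $v \in \tilde V$ and choosing $v_{n_k} \to v$ in $V$, an $\varepsilon/3$-argument finishes the proof: by the two displayed estimates both $\abs{q_h(t,v) - q_h(t,v_{n_k})}$ (uniformly in $h$) and $\abs{q(t,v_{n_k}) - q(t,v)}$ can be made $\le \varepsilon/3$ by taking $k$ large, after which $\abs{q_h(t,v_{n_k}) - q(t,v_{n_k})} \le \varepsilon/3$ for $h$ small since $t \notin N$, giving $\lim_{h\to 0} q_h(t,v) = q(t,v)$.

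The main obstacle is precisely this last uniformity argument: differentiability almost everywhere for each fixed vector is standard, but the conclusion must hold \emph{simultaneously} for the uncountably many $v \in \tilde V$ on a \emph{single} null set. This is only possible because the Lipschitz bound on $\fra$ makes the difference quotients equicontinuous in $v$ uniformly in $h$; without such an estimate a countable union of null sets would not suffice.
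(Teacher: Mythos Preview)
Your argument is correct and begins exactly as the paper does: pick a countable dense set $\{v_n\}$ in $\tilde V$, use that each $t\mapsto \fra(t,v_n,v_n)$ is real-valued Lipschitz, hence differentiable off a null set $N_n$, and set $N=\bigcup_n N_n$. The paper's proof in fact stops at this point, declaring $N$ to be the desired null set without spelling out the passage from the dense set to all of $\tilde V$.

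What you add is precisely that passage: the equicontinuity estimate $\lvert q_h(t,v)-q_h(t,w)\rvert \le L(\norm{v}_V+\norm{w}_V)\norm{v-w}_V$, uniform in $h$, followed by an $\varepsilon/3$ argument. This step is genuinely needed for the statement as written (the conclusion is for \emph{all} $v\in\tilde V$, not just a dense subset), and your derivation of the bound via $D_h(v,v)-D_h(w,w)=D_h(v,v-w)+D_h(v-w,w)$ is clean and correct. So your proof follows the paper's route but is more complete; the paper evidently regards the density extension as routine and leaves it to the reader.
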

\begin{proof}
	Let $L$ be a constant such that \eqref{eq:formderivative} holds and let $\{ v_n : n \in \N \}$ be a dense subset of $\tilde V$.
	By \eqref{eq:formderivative} for every $n \in \N$ there exists a null set $N_n \subset [0,T]$ such that $\fra(t,v_n,v_n)$ is
	differentiable for all $t \in [0,T] \setminus N_n$ with
	\[
		\abs*{\tfrac{\d}{\d t}\fra(t,v_n,v_n)} \le L \norm{v_n}_V^2 \quad (t \in [0,T] \setminus N_n).
	\]
	Hence $N := \cup_{n \in \N} N_n$ is the desired null set.
\end{proof}

The following two propositions are essential tools for the next two sections.

\begin{proposition}\label{prop:Lip_cont}
Suppose $\fra$ is additionally Lipschitz continuous.
Then 
\[
	\MR_\fra(H) \hookrightarrow C([0,T];V)
\] 
and for $u \in \MR_\fra(H)$ we have $\fra(.,u,u) \in W^{1,1}(0,T)$ with
\begin{equation}\label{eq:diff_formula_Lipschitz}
	(\fra(.,u,u))'= 2\Re(\A u\mid u')_H + \fra'(.,u,u),
\end{equation}
where $\fra'(t,v,v)$ equals $\lim_{h\to 0} \frac 1 h [\fra(t+h,v,v)-\fra(t,v,v)]$ if the limit exists and $0$ otherwise, for all $t \in [0,T]$ and all $v \in V$.
\end{proposition}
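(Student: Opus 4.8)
The plan is to feed the differentiation formula of Proposition~\ref{prop:diff_formula} with the extra information coming from Lipschitz continuity, identify the limit term explicitly, and then bootstrap the resulting $W^{1,1}$-regularity of $t \mapsto \fra(t,u,u)$ into continuity of $u$ with values in $V$.

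First I would identify the limit in Proposition~\ref{prop:diff_formula}. Fix $u \in \MR_\fra(H)$ and $\varphi \in C_c^1(0,T)$, and split the integrand
\[
	\tfrac 1 h\bigl[\fra(t+h,u(t+h),u(t))-\fra(t,u(t+h),u(t))\bigr] = \Lambda_h^1(t) + \Lambda_h^2(t),
\]
where $\Lambda_h^1(t) = \tfrac 1 h[\fra(t+h,u(t),u(t))-\fra(t,u(t),u(t))]$ and $\Lambda_h^2$ is the remaining term, in which the first argument carries the increment $u(t+h)-u(t)$. By \eqref{eq:formderivative} one has $\abs{\Lambda_h^2(t)} \le L\norm{u(t+h)-u(t)}_V\norm{u(t)}_V$, so Cauchy--Schwarz together with continuity of translation in $L^2(0,T;V)$ forces $\int_0^T \Lambda_h^2\, \varphi\, \d t \to 0$; this is exactly the step where Lipschitz continuity is indispensable, since the factor $L\abs h$ cancels the $1/h$. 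For $\Lambda_h^1$ I would pick a separable subspace $\tilde V \subset V$ containing $u(t)$ for almost every $t$ (the range of $u$ is essentially separable), apply Lemma~\ref{lem:nullset} to obtain a null set off which $\Lambda_h^1(t) \to \fra'(t,u(t),u(t))$, and dominate by $L\norm{u(t)}_V^2 \in L^1(0,T)$ using \eqref{eq:formderivative} in order to pass to the limit under the integral.

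Combining the two parts, the limit in Proposition~\ref{prop:diff_formula} equals $\int_0^T \fra'(.,u,u)\,\varphi\, \d t$, so that
\[
	-\int_0^T \fra(t,u,u)\,\varphi' \, \d t = \int_0^T \bigl[2\Re(\A u \mid u')_H + \fra'(t,u,u)\bigr]\varphi \, \d t
\]
for every $\varphi \in C_c^1(0,T)$. Since $2\Re(\A u \mid u')_H \in L^1(0,T)$ by Cauchy--Schwarz in $L^2(0,T;H)$ and $\abs{\fra'(.,u,u)} \le L\norm{u}_V^2 \in L^1(0,T)$, the right-hand integrand is integrable, and this yields $\fra(.,u,u)\in W^{1,1}(0,T)$ together with formula \eqref{eq:diff_formula_Lipschitz}.

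It remains to upgrade this to continuity into $V$. As $W^{1,1}(0,T) \hookrightarrow C([0,T])$, the function $\fra(.,u,u)$ is continuous; by symmetry it is real, and coercivity \eqref{eq:coercive} gives $\alpha\norm{u(t)}_V^2 \le \fra(t,u(t),u(t))$, so that $u \in L^\infty(0,T;V)$ and, with $u \in C([0,T];H)$, Lemma~\ref{lem:weak_cont} provides weak continuity in $V$. To obtain strong continuity I would fix $t_n \to t$, so $u(t_n) \rightharpoonup u(t)$, and expand $\fra(t,u(t_n)-u(t),u(t_n)-u(t))$: weak convergence kills the mixed terms through the bounded (anti)linear functionals $\fra(t,\cdot,u(t))$ and $\fra(t,u(t),\cdot)$, while \eqref{eq:formderivative} with $u \in L^\infty(0,T;V)$ lets me replace $\fra(t,u(t_n),u(t_n))$ by $\fra(t_n,u(t_n),u(t_n)) = \fra(.,u,u)(t_n)$, which converges to $\fra(.,u,u)(t)$ by continuity. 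Hence $\fra(t,u(t_n)-u(t),u(t_n)-u(t)) \to 0$, and \eqref{eq:coercive} forces $u(t_n)\to u(t)$ in $V$; tracking the constants (bounding $\norm{\fra(.,u,u)}_{W^{1,1}(0,T)}$, hence $\sup_t\norm{u(t)}_V^2$, by $\norm{u}_{\MR_\fra(H)}^2$ via the embedding $\MR_\fra(H)\hookrightarrow L^2(0,T;V)$) gives the continuous embedding. I expect the two genuinely delicate points to be the vanishing of the cross term $\Lambda_h^2$ and this weak-to-strong upgrade, whose engine is the interplay between the scalar continuity of $\fra(.,u,u)$ and the equivalent time-dependent inner products $\fra(t,\cdot,\cdot)$.
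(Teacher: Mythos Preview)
Your approach coincides with the paper's: the splitting $\Lambda_h^1+\Lambda_h^2$, the vanishing of $\Lambda_h^2$ via Lipschitz continuity and $L^2$-continuity of translations, and the treatment of $\Lambda_h^1$ through Lemma~\ref{lem:nullset} and dominated convergence are exactly the paper's steps, and the paper uses the same expansion of $\fra(t,u(t)-u(s),u(t)-u(s))$ to pass from weak to strong continuity in $V$.

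There is, however, one point you slide over that the paper treats carefully. You assert that ``$\fra(.,u,u)$ is continuous'' because it lies in $W^{1,1}(0,T)$, and then evaluate this function at arbitrary $t_n\to t$. But $W^{1,1}$ only furnishes a continuous \emph{representative}; what your weak-to-strong argument actually needs is that the concrete function $s\mapsto\fra(s,u(s),u(s))$, built from the weakly $V$-continuous representative of $u$, is itself continuous. Weak continuity of $u$ in $V$ gives only lower semicontinuity of $s\mapsto\fra(t,u(s),u(s))$, which yields $F\le\phi$ but not the reverse inequality, so your estimate $\fra(.,u,u)(t_n)\to\fra(.,u,u)(t)$ is not yet justified. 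The paper closes exactly this gap by invoking Lemma~\ref{lem:Lebesguepoint} (a Lipschitz form is in particular right- and left-continuous), which shows that every $t$ is a Lebesgue point of $\fra(.,u,u)$ with value $\fra(t,u(t),u(t))$; this forces the pointwise function to coincide with its continuous $W^{1,1}$-version. Once you insert that step, your argument is complete and matches the paper's proof.
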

\begin{proof}
	Let $u \in \MR_\fra(H)$, $\varphi \in C^1_c(0,T)$ and $h \in \R$ such that $\supp\varphi \pm h \subset [0,T]$.
	We first show $\fra(.,u,u) \in W^{1,1}(0,T)$ such that \eqref{eq:diff_formula_Lipschitz} holds.
	By Proposition~\ref{prop:diff_formula} it suffices to show
	\begin{equation*}
		\frac 1 h \int_0^T \left[\fra(t+h,u(t+h),u(t))-\fra(t,u(t+h),u(t))\right] \varphi \ \d{t} \to \int_0^T \fra'(t,u,u) \varphi \ \d{t}
	\end{equation*}
	as $h\to0$. 
	Note that by the Lipschitz continuity of $\fra$ and the convergence of $u(.+h) \to u$ in $L^2(\supp(\varphi);V)$ as $h\to 0$ it suffices to show
	\begin{equation*}
		\frac 1 h \int_0^T \left[\fra(t+h,u(t),u(t))-\fra(t,u(t),u(t))\right] \varphi \ \d{t} \to \int_0^T \fra'(t,u,u) \varphi \ \d{t}
	\end{equation*}
	as $h\to0$.
	We have on $\supp(\varphi)$
	\[
		\frac 1 h \left[\fra(.+h,u(.),u(.))-\fra(.,u(.),u(.)) \right] \le L \norm{u(.)}_V^2 \in L^1(0,T).
	\]
	Moreover since $u\in L^2(0,T;V)$ the subspace $\operatorname{span}\{u(t)\}_{t\in[0,T]}$ is separable. 
	Thus by Lemma~\ref{lem:nullset}  there exists a null set $N\subset[0,T]$ such that
	\[
		\frac 1 h \left[\fra(t+h,u(t),u(t))-\fra(t,u(t),u(t)) \right] \to \fra'(t,u(t),u(t)) \quad (h\to0)
	\] 
	for all $t \in (0,T)\setminus N$.
	Now the claim follows by the dominated convergence theorem.
	
	Since the function $\fra(.,u,u)$ is in $W^{1,1}(0,T)$ it has a continuous version.
	By \eqref{eq:coercive} we have $u \in L^\infty(0,T;V)$.
	Since $u \in H^1(0,T;H)$ we may choose the representative of $u$ that is in $C([0,T];H)$.
	Thus by Lemma~\ref{lem:weak_cont} $u(.)$ is weakly continuous in $V$.
	Given this representative of $u$ we obtain by Lemma~\ref{lem:Lebesguepoint} that the function $\fra(.,u,u)$ is continuous.
	Finally let $s,t \in [0,T]$. Then
	\begin{align*}
		\alpha\norm{u(t)-u(s)}_V^2 &\le \fra(t,u(t)-u(s),u(t)-u(s))\\
		 &= 2 \Re \fra(t,u(t)-u(s),u(t)) + \fra(t,u(s),u(s)) - \fra(s,u(s),u(s)) \\
		 	&\quad+ \fra(s,u(s),u(s)) - \fra(t,u(t),u(t))\\
		 &\le 2 \Re \fra(t,u(t)-u(s),u(t)) + L \abs{t-s} \norm{u}_{L^\infty(0,T)}^2\\
		  	&\quad+ \fra(s,u(s),u(s)) - \fra(t,u(t),u(t))\\
		& \to 0 \quad (s \to t). \tag*\qedhere
	\end{align*}
\end{proof}

\begin{lemma}\label{lem:fub}
	Let $g \colon [0,T] \to \R$ be bounded and non-decreasing and let $\psi \in C_c(0,T)$. Then
	\[
		\lim_{h \to 0} \int_{[0,T]} \frac{g(t+h)-g(t)}{h} \psi (t) \ \d{t} = \int_{[0,T]} \psi \ \d{\mu_g},
	\]
	where $\mu_g$ denotes the unique Borel measure on $[0,T]$ such that $\mu((a,b]) = g^+(b)-g^+(a)$ for $0\le a<b \le T$.
\end{lemma}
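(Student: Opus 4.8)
The plan is to identify the difference quotient of $g$ with an average of $\psi$ against the Lebesgue--Stieltjes measure $\mu_g$ via an interchange of integrals, and then to pass to the limit by dominated convergence. First I would reduce $g$ to its right-continuous version $g^+$: a bounded non-decreasing function has at most countably many discontinuities, so $\{t \in [0,T] : g(t) \neq g^+(t)\}$ is countable, hence Lebesgue-null. Therefore $\int_0^T g(t)\psi(t)\,\d t = \int_0^T g^+(t)\psi(t)\,\d t$, and since a translate of a null set is null, also $\int_0^T g(t+h)\psi(t)\,\d t = \int_0^T g^+(t+h)\psi(t)\,\d t$. This replacement is what makes the argument clean, because $g^+$ satisfies the exact identity $g^+(t+h)-g^+(t) = \mu_g((t,t+h])$ for $h>0$, which is the defining property of $\mu_g$.

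Next, fix $h>0$ and use that $\supp\psi$ is a compact subset of $(0,T)$, so that for $\abs h$ small every relevant point $t+h$ lies in $[0,T]$. Writing $\mu_g((t,t+h]) = \int_{[0,T]} \1_{\{t < s \le t+h\}}\,\d\mu_g(s)$ I would express the difference quotient as
\[
	\frac 1 h \int_0^T \big(g^+(t+h)-g^+(t)\big)\psi(t)\,\d t = \frac 1 h \int_0^T \psi(t) \int_{[0,T]} \1_{\{t < s \le t+h\}}\,\d\mu_g(s)\,\d t.
\]
Because $\mu_g$ is a finite measure, $\psi$ is bounded, and the indicator is Borel, the integrand is bounded and jointly measurable on the finite product space, so Fubini's theorem applies. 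Interchanging the integrals and noting $\{t : t < s \le t+h\} = [s-h,s)$ transforms the right-hand side into
\[
	\int_{[0,T]} \Big( \frac 1 h \int_{s-h}^{s} \psi(t)\,\d t \Big)\, \d\mu_g(s).
\]

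It then remains to pass to the limit. For every fixed $s$ the inner average satisfies $\frac 1 h \int_{s-h}^{s} \psi(t)\,\d t \to \psi(s)$ as $h\downarrow 0$ by continuity of $\psi$, and it is bounded by $\norm{\psi}_\infty$ uniformly in $h$ and $s$; since $\mu_g$ is finite, dominated convergence yields $\int_{[0,T]}\psi\,\d\mu_g$. The case $h\uparrow 0$ is symmetric: writing $h=-k$ with $k>0$ one uses $g^+(t)-g^+(t-k) = \mu_g((t-k,t])$, and the same Fubini-then-dominated-convergence argument produces the inner average $\frac 1 k \int_{s}^{s+k}\psi(t)\,\d t \to \psi(s)$, again with limit $\int_{[0,T]}\psi\,\d\mu_g$. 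Since both one-sided limits coincide, the two-sided limit exists and equals the claimed value.

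I expect the only genuinely delicate points to be the replacement of $g$ by $g^+$ and the bookkeeping at the endpoints: one must use the half-open interval identity for $\mu_g$ on the correct side for each sign of $h$, and observe that possible atoms of $\mu_g$ at $0$ or $T$ are irrelevant because $\psi$ vanishes near the endpoints. Once $g$ has been replaced by $g^+$ and the condition $\supp\psi \subset (0,T)$ is exploited, the Fubini interchange and the dominated convergence step are routine.
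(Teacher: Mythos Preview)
Your proposal is correct and follows essentially the same route as the paper: replace $g$ by its right-continuous version $g^+$ (using that the set of discontinuities is countable), rewrite $g^+(t+h)-g^+(t)$ as $\mu_g((t,t+h])$, apply Fubini to obtain the inner average $\tfrac{1}{h}\int_{s-h}^{s}\psi(t)\,\d t$, and pass to the limit by dominated convergence; the case $h<0$ is symmetric. The only cosmetic difference is that the paper extends $g$ and $\psi$ to all of $\R$ rather than invoking $\supp\psi\subset(0,T)$ to keep everything inside $[0,T]$, and it remarks that the inner averages converge uniformly (by uniform continuity of $\psi$), whereas you use only pointwise convergence---either is enough for the dominated convergence step.
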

\begin{proof}
	We extend $g$ to $\R$ by $g(0)$ on $(-\infty, 0)$ and by $g(T)$ on $(T,\infty)$.
	Moreover we extend $\psi$ to $\R$ by $0$ on the complement of $[0,T]$.
	Note that $g$ has at most countably many discontinuities.
	Let $h >0$. Then
	\begin{align*}
		\int_\R \frac{g(t+h)-g(t)}{h} \psi (t) \ \d{t}
			&= \int_\R \frac{g^+(t+h)-g^+(t)}{h} \psi (t) \ \d{t}\\
			&= \frac 1 h\int_\R \int_\R \1_{(t,t+h]}(s) \psi (t) \ \d \mu_g(s)\ \d{t}\\
			&= \frac 1 h \int_\R \int_\R \1_{[s-h,s)}(t) \psi (t) \ \d{t} \ \d \mu_g(s)
	\end{align*}
	by Fubini's theorem.
	Since $\psi$ is uniformly continuous we have $\frac 1 h \int_{s-h}^s \psi (t) \ \d{t} \to \psi(s)$ uniformly as $h \to 0$.
	Hence by the dominated convergence theorem
	\[
		\int_\R \frac{g(t+h)-g(t)}{h} \psi (t) \ \d{t} \to \int_\R \psi \ \d{\mu_g}
	\]
	as $h\to 0$. The case $h < 0$ is similarly.
\end{proof}

\begin{proposition}\label{prop:BV}
	Suppose $\fra$ is additionally of bounded variation.
	Then 
	\[
		\MR_\fra(H)\cap L^\infty(0,T;V) \hookrightarrow C([0,T];V)
	\]
	and for $u \in \MR_\fra(H)\cap L^\infty(0,T;V)$ we have $\fra(.,u,u) \in BV(0,T)$ and
	\begin{multline}\label{eq:BVformula}
		\fra^-(t,u(t),u(t))- \fra^+(0,u(0),u(0))\\ \le \int_0^t 2 \Re (\A u \mid u')_H \ \d{s} + \int_{(0,t)} \norm{u}_V^2 \ \d{\mu_g}.
	\end{multline}
\end{proposition}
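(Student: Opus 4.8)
The plan is to run the scheme of Proposition~\ref{prop:Lip_cont}, but with the pointwise form derivative replaced by the measure $\mu_g$. Fix $u \in \MR_\fra(H)\cap L^\infty(0,T;V)$, write $K:=\norm{u}_{L^\infty(0,T;V)}$, put $\phi:=\fra(\cdot,u,u)$, and abbreviate the error term from Proposition~\ref{prop:diff_formula} by
\[
	R_h := \frac 1 h \int_0^T \left[\fra(t+h,u(t+h),u(t))-\fra(t,u(t+h),u(t))\right] \varphi(t) \ \d{t}.
\]
The four steps are: (i) show $\phi\in BV(0,T)$; (ii) identify the one-sided limits of $\phi$; (iii) deduce continuity of $u$ in $V$; (iv) upgrade the crude bound on the ``form part'' of the derivative of $\phi$ to the sharp estimate and integrate.

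First I would prove $\phi\in BV(0,T)$. Since $u\in L^2(0,T;V)$, $V$-boundedness gives $\phi\in L^1(0,T)$, so it suffices to bound $\abs{\int_0^T\phi\varphi'\,\d t}$ by $C\norm\varphi_\infty$ for all $\varphi\in C_c^1(0,T)$. By Proposition~\ref{prop:diff_formula} this integral equals $\int_0^T 2\Re(\A u\mid u')_H\varphi\,\d t+\lim_h R_h$; the first term is $\le 2\norm{\A u}_{L^2(0,T;H)}\norm{u'}_{L^2(0,T;H)}\norm\varphi_\infty$ by Cauchy--Schwarz. For $h>0$ the bounded variation hypothesis yields $\abs{R_h}\le K^2\frac 1 h\int_0^T[g(t+h)-g(t)]\abs\varphi\,\d t$, which by Lemma~\ref{lem:fub} (with $\psi=K^2\abs\varphi$) tends to $K^2\int_{[0,T]}\abs\varphi\,\d\mu_g\le K^2\mu_g([0,T])\norm\varphi_\infty$. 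Hence $\abs{\int\phi\varphi'}\le C\norm\varphi_\infty$ and $\phi\in BV(0,T)$; write $\nu$ for its distributional derivative, a finite signed measure.

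Because $\phi\in BV$, the one-sided limits $\phi(t^\pm)$ exist everywhere. Applying Lemma~\ref{lem:Lebesguepoint} to the right-continuous version $\fra^+$ (recall $\MR_\fra(H)=\MR_{\fra^+}(H)$ and $\phi=\fra^+(\cdot,u,u)$ a.e.) identifies the right Lebesgue value, giving $\phi(t^+)=\fra^+(t,u(t),u(t))$; the left-continuous version gives $\phi(t^-)=\fra^-(t,u(t),u(t))$. I would then derive continuity of $u$ in $V$ from the coercivity estimate
\[
	\alpha\norm{u(s)-u(t)}_V^2 \le \fra^+(s,u(s),u(s)) - 2\Re\fra^+(s,u(s),u(t)) + \fra^+(s,u(t),u(t)),
\]
valid for $s>t$. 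As $s\downarrow t$, weak continuity of $u$ in $V$ (Lemma~\ref{lem:weak_cont}) together with right-continuity of $\fra^+$ sends the last two terms to $-\fra^+(t,u(t),u(t))$, while $\limsup_{s\downarrow t}\fra^+(s,u(s),u(s))=\phi(t^+)=\fra^+(t,u(t),u(t))$; hence $\limsup_{s\downarrow t}\norm{u(s)-u(t)}_V^2\le 0$. The symmetric argument with $\fra^-$ treats $s\uparrow t$, so $u\in C([0,T];V)$, and tracking the constants gives the continuous embedding.

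Finally, continuity of $u$ in $V$ makes $t\mapsto\norm{u(t)}_V^2$ continuous, which is exactly what sharpens the bound. For $\varphi\ge 0$ and $h>0$, bounded variation gives $R_h\le\frac 1 h\int_0^T[g(t+h)-g(t)]\norm{u(t+h)}_V\norm{u(t)}_V\varphi\,\d t$; since $\norm{u(\cdot+h)}_V\norm{u(\cdot)}_V\varphi\to\norm{u}_V^2\varphi$ uniformly (with uniformly bounded $\mu_g$-mass of the kernels), Lemma~\ref{lem:fub} gives $\lim_h R_h\le\int_{[0,T]}\norm{u}_V^2\varphi\,\d\mu_g$. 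Combined with Proposition~\ref{prop:diff_formula}, $\int\varphi\,\d\nu\le\int 2\Re(\A u\mid u')_H\varphi\,\d t+\int\norm{u}_V^2\varphi\,\d\mu_g$ for all such $\varphi$, i.e.\ $\nu\le 2\Re(\A u\mid u')_H\,\d t+\norm{u}_V^2\,\d\mu_g$ as measures on $(0,T)$. Evaluating on $(0,t)$ and using $\nu((0,t))=\phi(t^-)-\phi(0^+)=\fra^-(t,u(t),u(t))-\fra^+(0,u(0),u(0))$ yields \eqref{eq:BVformula}. The main obstacle is precisely this last sharpening: getting the pointwise factor $\norm{u(\cdot)}_V^2$ instead of $K^2$ forces one to know in advance that $u$ is continuous in $V$, so the BV property and the identification of the one-sided limits must be established first; the interplay between the jumps of $g$ and the continuity of $u$ in $V$ is the delicate point, handled by passing to $\fra^\pm$ and invoking Lemma~\ref{lem:Lebesguepoint}.
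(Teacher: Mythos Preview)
Your proof is correct and follows the same strategy as the paper's: the crude bound via Lemma~\ref{lem:fub} to control the form part of the derivative, Lemma~\ref{lem:Lebesguepoint} applied to $\fra^\pm$ to identify the one-sided limits, coercivity plus weak continuity (Lemma~\ref{lem:weak_cont}) to get $V$-continuity, and then continuity of $u$ in $V$ to sharpen $K^2$ to $\norm{u(\cdot)}_V^2$. The only difference is packaging: the paper tests the inequality \eqref{eq:phist} against explicit trapezoidal $\varphi$ and passes to the limit with Lemma~\ref{lem:Lebesguepoint} to obtain the pointwise inequalities directly, while you work abstractly with the distributional derivative $\nu$ of $\phi$ as a signed measure and then evaluate the measure inequality on $(0,t)$ using $\nu((0,t))=\phi(t^-)-\phi(0^+)$.
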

Note that we will see in the next section that $\MR_\fra(H) \hookrightarrow L^\infty(0,T;V)$.
\begin{proof}
	Let $u \in \MR_\fra(H)\cap L^\infty(0,T;V)$.
	First we show that $u \in C([0,T];V)$.
	For $\varphi \in C^{1}_c(0,T)$ we have by \eqref{eq:Mpunkt} and Lemma~\ref{lem:fub}
	\begin{align*}
		&\lim_{h\to 0}\frac 1 h \int_0^T \left[\fra(t+h,u(t+h),u(t))-\fra(t,u(t+h),u(t))\right] \varphi \ \d{t}\\
		&\le \limsup_{h\to 0}\int_0^T \frac{g(t+h)-g(t)}h \abs{\varphi} \ \d{t}\ \norm{u}_{L^\infty(0,T;V)}^2\\
		&= \int_{[0,T]} \abs{\varphi} \ \d{\mu_g}\ \norm{u}_{L^\infty(0,T;V)}^2,
	\end{align*}
	where $g \colon[0,T] \to \R$ is a bounded and non-decreasing function such that \eqref{eq:Mpunkt} holds.
	Thus by Proposition~\ref{prop:diff_formula} we obtain
	\begin{equation}\label{eq:phist}
		-\int_0^T \fra(t,u,u) \varphi' \, \d{t} \le  \int_0^T 2 \Re (\A u\mid u')_H \varphi \ \d{t} + \int_{[0,T]} \abs{\varphi} \ \d{\mu_g}\ \norm{u}_{L^\infty(0,T;V)}^2
	\end{equation}
	for all $\varphi \in C^{1}_c(0,T)$ and by density even for Lipschitz continuous $\varphi$ with $\varphi(0)=\varphi(T)=0$.
	Let $0 \le t < s < T$, $0<\delta < \min\{T-s, \frac {s-t} 2  \}$ and set
	\[
		\varphi(r) = \frac {r-t} \delta \1_{(t,t+\delta)}(r) + \1_{[t+\delta, s]}(r) + \frac {s+\delta-r}\delta \1_{(s, s+\delta)}.
	\]
	We insert this $\varphi$ in \eqref{eq:phist} and take the limit $\delta \to 0$. Hence by Lemma~\ref{lem:Lebesguepoint}
	\[
		\fra^+(s,u(s),u(s))- \fra^+(t,u(t),u(t)) \le \int_t^s 2(\A u \mid u')_H \ \d{r} + [g^+(s)-g^+(t)] \norm{u}_{L^\infty(0,T;V)}^2
	\] 
	 for $0 \le t < s < T$.
	By Lemma~\ref{lem:weak_cont} the function $u(.)$ is weakly continuous in $V$.
	Now let $s,t \in [0,T)$ with $s > t$. Then
	\begin{align*}
		\alpha\norm{u(t)-u(s)}_V^2 &\le \fra^+(t,u(t)-u(s),u(t)-u(s))\\
		 &= 2 \Re \fra^+(t,u(t)-u(s),u(t))\\
		 	&\quad + \fra^+(t,u(s),u(s)) - \fra^+(s,u(s),u(s)) \\
		 	&\quad+ \fra^+(s,u(s),u(s)) - \fra^+(t,u(t),u(t))\\
		 &\le 2 \Re \fra^+(t,u(t)-u(s),u(t)) + [g^+(s)-g^+(t)] \norm{u}_{L^\infty(0,T;V)}^2\\
		  	&\quad+ \int_t^s 2\abs{(\A u \mid u')_H} \ \d{r} + [g^+(s)-g^+(t)] \norm{u}_{L^\infty(0,T;V)}^2\\
		& \to 0 \quad (s \downarrow t).
	\end{align*}
	Hence $u(.)$ is right-continuous in $V$. Similarly $u(.)$ is left-continuous in $V$.
	
	It remains to show \eqref{eq:BVformula}. Let $\varphi \in C_c^1(0,T)$.
	Similarly as in the first estimate of the proof we have for sufficiently small $h \in \R$
	\begin{align*}
		&\frac 1 h \int_0^T \left[\fra(t+h,u(t+h)-u(t),u(t))-\fra(t,u(t+h)-u(t),u(t))\right] \varphi \ \d{t}\\
		&\le \int_{[0,T]} \abs{\varphi} \ \d{\mu_g}\ \norm{u(.+h)-u}_{L^\infty(\abs{h},T-\abs{h};V)} \norm{u}_{L^\infty(0,T;V)}.
	\end{align*}
	We obtain by the continuity of $u(.)$ in $V$, the above estimate and \eqref{eq:Mpunkt}
	\begin{align*}
		&\lim_{h\to 0}\frac 1 h \int_0^T \left[\fra(t+h,u(t+h),u(t))-\fra(t,u(t+h),u(t))\right] \varphi \ \d{t}\\
		&=\lim_{h\to 0}\frac 1 h \int_0^T \left[\fra(t+h,u(t),u(t))-\fra(t,u(t),u(t))\right] \varphi \ \d{t}\\
		&\le \limsup_{h\to 0}\int_0^T \frac{g(t+h)-g(t)}h \norm{u}^2_V \abs\varphi \ \d{t}\ \\
		&= \int_{[0,T]} \norm{u}^2_V \abs{\varphi} \ \d{\mu_g},
	\end{align*}
	where we used Lemma~\ref{lem:fub} in the last step. 
	Thus by Proposition~\ref{prop:diff_formula}
	\begin{equation}\label{eq:BVestimate2}
		- \int _0^T \fra(t,u,u) \varphi' \ \d{t} \le \int_0^T 2 \Re (\A u\mid u')_H \varphi \ \d{t}
			+ \int_{[0,T]} \norm{u}^2_V \abs{\varphi} \ \d{\mu_g}.
	\end{equation}
	for all $\varphi \in C_c^1(0,T)$ and by density even for Lipschitz continuous $\varphi$ with $\varphi(0)=\varphi(T)=0$.
	Let $t \in [0,T]$, $0<\delta < \min\{T-t, \frac t 2  \}$ and set
	\[
		\varphi(s) = \frac s \delta \1_{(0,\delta)}(s) + \1_{[\delta, t-\delta]}(s) + \frac {t-s}\delta \1_{}(t-\delta, t).
	\]
	If we insert this particular choice of $\varphi$ in \eqref{eq:BVestimate2}, then taking the limit $\delta \downarrow 0$ shows \eqref{eq:BVformula} by Lemma~\ref{lem:Lebesguepoint}.
\end{proof}

\section{Well posedness with maximal regularity}\label{sec:MR1}
Let $V, H$ be separable Hilbert spaces over the field $\K$ such that $V \overset d \hookrightarrow H$.

\begin{theorem}\label{thm:MRp1}
	Let $\fra \colon [0,T] \times V \times V \to \K$ be a $V$-bounded coercive symmetric form of bounded variation and $\A\sim\fra$.
	Then for every $f \in L^2(0,T;H)$ and $u_0 \in V$ there exists a unique $u \in \MR_\fra(H)$ such that
	\begin{equation*}
		\left\{\begin{aligned}
			u' + \A u &= f\\
			u(0)&=u_0.
		\end{aligned} \right.
	\end{equation*}
	Moreover $\MR_\fra(H) \hookrightarrow C([0,T];V)$.
\end{theorem}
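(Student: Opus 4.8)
\emph{Strategy.} The plan is to treat first the case of a Lipschitz (indeed smooth) form, where Proposition~\ref{prop:Lip_cont} is available, and then to reach the general form of bounded variation by \emph{regularising $\fra$ in time}. The decisive point will be that the basic energy estimate depends on $\fra$ only through $\alpha$, $M$ and the total variation $g(T)-g(0)$, and hence survives this regularisation. Uniqueness is immediate: since $\MR_\fra(H)\hookrightarrow\MR(V,V')$, the difference $w$ of two solutions lies in $\MR(V,V')$ and satisfies $w'+\A w=0$, $w(0)=0$, so $w=0$ by Theorem~\ref{thm:Lions}.

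\emph{The energy estimate.} Assume first that $\fra$ is Lipschitz. For a solution $u$ I would pair the equation with $u'$ in $H$ and invoke the differentiation formula \eqref{eq:diff_formula_Lipschitz} to replace $2\Re(\A u\mid u')_H$ by $(\fra(.,u,u))'-\fra'(.,u,u)$; substituting $\A u=f-u'$ produces the combination $2\Re(f\mid u')_H-2\norm{u'}_H^2$, which carries the favourable sign. Writing $\phi(t):=\fra(t,u(t),u(t))$ and using \eqref{eq:coercive}, \eqref{eq:Vbounded} to sandwich $\alpha\norm{u(t)}_V^2\le\phi(t)\le M\norm{u(t)}_V^2$, together with $\abs{\fra'(t,v,v)}\le g'(t)\norm{v}_V^2$, the integrated identity turns into a Gronwall inequality for $\phi$ against the measure $\mu_g$. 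It yields
\[
	\norm{u}_{L^\infty(0,T;V)}^2\le C\big(\norm{u_0}_V^2+\norm{f}_{L^2(0,T;H)}^2\big),
\]
and then, returning to the identity, a bound on $\norm{u'}_{L^2(0,T;H)}$ and on $\norm{\A u}_{L^2(0,T;H)}=\norm{f-u'}_{L^2(0,T;H)}$, with $C=C(\alpha,M,g(T)-g(0))$ independent of the Lipschitz constant.

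\emph{Existence and the limit.} For a Lipschitz (or $C^1$) form, existence of $u\in\MR_\fra(H)$ can be produced by a Galerkin scheme, the estimate above supplying the compactness. For a general form of bounded variation I would mollify $g$ and $\fra$ in time to obtain Lipschitz forms $\fra_n$ that are $V$-bounded (with $M$), coercive (with $\alpha$), symmetric, of variation $\le g(T)-g(0)$, and satisfy $\fra_n(t,v,w)\to\fra(t,v,w)$ for a.e.\ $t$. Solving the $n$-th problem gives $u_n$; the uniform estimate lets me extract $u_n\rightharpoonup u$ in $L^2(0,T;V)\cap H^1(0,T;H)$, weakly-$*$ in $L^\infty(0,T;V)$, and $\A_n u_n\rightharpoonup\chi$ in $L^2(0,T;H)$. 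Passing to the limit in the weak formulation and using $\fra_n\to\fra$ identifies $\chi=\A u\in L^2(0,T;H)$, so $u\in\MR_\fra(H)$ solves the problem, the initial condition being preserved by convergence in $C([0,T];H)$; lower semicontinuity keeps $u\in L^\infty(0,T;V)$, whence Proposition~\ref{prop:BV} gives $u\in C([0,T];V)$.

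\emph{The embedding and the main obstacle.} To obtain $\MR_\fra(H)\hookrightarrow C([0,T];V)$ for \emph{every} element, not merely the constructed solutions, I would argue as follows. Given $w\in\MR_\fra(H)$, set $f:=w'+\A w\in L^2(0,T;H)$; for a.e.\ $\tau$ one has $w(\tau)\in V$, and by uniqueness $w$ coincides on $[\tau,T]$ with the solution started at $w(\tau)$, so $w\in C((0,T];V)$. Since $w\in L^2(0,T;V)$ we have $\liminf_{\tau\downarrow0}\norm{w(\tau)}_V<\infty$, and a weakly convergent sequence $w(\tau_k)$ must converge to the $H$-limit $w(0)$, so $w(0)\in V$; then $w$ is the solution with datum $w(0)$ and therefore lies in $L^\infty(0,T;V)$, so Proposition~\ref{prop:BV} gives $w\in C([0,T];V)$. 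As $\MR_\fra(H)\hookrightarrow C([0,T];H)$ continuously, the closed graph theorem promotes this set inclusion into $C([0,T];V)$ to a continuous embedding. I expect the genuine difficulties to be, first, keeping the energy estimate strictly independent of the regularisation (so that it controls the limit), and, second, the limit identification $\A_n u_n\to\A u$, where the a.e.\ convergence $\fra_n\to\fra$ must be combined with the weak convergence of $u_n$ while the jumps of $g$ are handled through the one-sided versions $\fra^\pm$ and the measure $\mu_g$.
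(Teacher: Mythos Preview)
Your approach is essentially the paper's: mollify $\fra$ in time, derive via Proposition~\ref{prop:Lip_cont} and Gronwall an energy estimate depending only on $\alpha$, $M$ and $g(T)-g(0)$, pass to the limit, and conclude with Proposition~\ref{prop:BV}. The one substantive difference is in the limit passage. The paper does not identify a weak limit $\chi$ of $\A_n u_n$; instead it shows $(\A_n-\A)u_n\to 0$ \emph{strongly} in $L^2(0,T;V')$, combining the uniform $L^\infty(0,T;V)$ bound on $u_n$ with $\norm{g(\cdot+\tfrac1n)-g(\cdot-\tfrac1n)}_{L^2(0,T)}\to 0$ (continuity of translation in $L^2$, valid even across jumps of $g$). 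By \eqref{eq:MRV'est} this gives $u_n\to u$ in $\MR(V,V')$, and then boundedness in $H^1(0,T;H)$ forces $u_n\rightharpoonup u$ there, so $\A u=f-u'\in L^2(0,T;H)$ directly. Thus the obstacle you anticipate is handled not through $\fra^\pm$ or $\mu_g$ but through the $L^\infty(0,T;V)$ bound and $L^2$-translation of $g$; your weak-limit route can also be completed, but it needs the same ingredient. Your argument for the embedding $\MR_\fra(H)\hookrightarrow C([0,T];V)$---restarting an arbitrary $w\in\MR_\fra(H)$ at times $\tau>0$ and extracting a $V$-weak limit as $\tau\downarrow 0$ to force $w(0)\in V$---is in fact more explicit than the paper's one-line assertion that the pointwise bound on $\norm{u(t)}_V$ implies $\MR_\fra(H)\hookrightarrow L^\infty(0,T;V)$, and it supplies the verification that every element of $\MR_\fra(H)$ is indeed a solution with initial datum in $V$.
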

\begin{proof}
	Let $g\colon [0,T]\to \R$ be a bounded and non-decreasing function such that \eqref{eq:Mpunkt} holds.
	We extend $\fra$ to $\R\times V \times V$ by $\fra(0,.,.)$ for $t<0$ and by $\fra(T,.,.)$ for $t>T$ 
	and we extend $g$ to $\R$ by $g(0)$ for $t<0$ and by $g(T)$ for $t>T$.
	We let $\rho \colon \R \to [0,\infty)$ be a mollifier with support $[-1,1]$ and define the function $\rho_n \colon \R \to [0,\infty)$ by $\rho_n(t):= n \rho(n t)$ for $n \in \N$.
	Furthermore we define the form $\fra_n \colon [0,T] \times V \times V \to \K$ by $\fra_n(t,u,v) := (\fra(.,u,v) * \rho_n)(t)$
	and the function $g_n \colon [0,T] \to \R$ by $g_n := \rho_n * g$ for $n \in \N$.
	Note that $\fra_n$ is a symmetric form with the same $V$-bound and coerciveness constant as $\fra$.
	Moreover $g_n$ is bounded and non-decreasing and $\fra_n$ is of bounded variation where
	\begin{equation*}
		\abs{\fra_n(t,v,w)- \fra_n(s,v,w)} \le [g_n(t)-g_n(s)] \norm{v}_V \norm{w}_V \quad (0\le s \le t \le T, v,w \in V).
	\end{equation*}
	We denote by $\A_n$ the associated operator of $\fra_n$. 
	
	Let $f \in L^2(0,T;H)$ and $u_0 \in V$.
	By Theorem~\ref{thm:Lions} there exists a unique $u \in \MR(V,V')$ such that
	\begin{equation*}
		\left\{\begin{aligned}
			u' + \A u &= f\\
			u(0)&=u_0.
		\end{aligned} \right.
	\end{equation*}
	By a combination of \cite[Theorem~1.1, p.~129]{Lio61} and \cite[Theorem~5.1, p.~138]{Lio61} and Proposition~\ref{prop:Lip_cont}
	(see also \cite[Thoerem~4.2]{ADLO14}) for every $n \in \N$ there exists a unique $u_n \in \MR_{\fra_n}(H)$ such that
	\begin{equation*}
		\left\{\begin{aligned}
			u'_n + \A_n u_n &= f\\
			u_n(0)&=u_0
		\end{aligned} \right.
	\end{equation*}
	and $u_n \in C([0,T];V)$.
	It is our aim to show that $u_n$ converges to $u$ in $\MR(V,V')$ and converges weakly to $u$ in $H^1(0,T;H)$; hence $u \in \MR_\fra(H)$ is the desired solution.
	
	First we provide an estimate for $u_n$. Let $n \in \N$ and $t \in [0,T]$. Then by Proposition~\ref{prop:Lip_cont} and the continuity of $u_n(.)$ in $V$
	\begin{align*}
		&\norm{f}_{L^2(0,t;H)}^2+M\norm{u_0}_V^2 \\
		&\ge \norm{u'_n}_{L^2(0,t;H)}^2 + \norm{\A_n u_n}_{L^2(0,t;H)}^2\\
			&\quad + 2\Re(\A_n u_n, u'_n)_{L^2(0,t;H)} + \fra_n(0,u_n(0),u_n(0))\\
		&= \norm{u'_n}_{L^2(0,t;H)}^2 + \norm{\A_n u_n}_{L^2(0,t;H)}^2 \\
			&\quad + \fra_n(t,u_n(t),u_n(t))- \int_0^t \fra'_n(s,u_n,u_n) \ \d{s}
	\end{align*}
	where $\fra'_n(t,v,w) = \rho_n' *\fra(.,v,w) (t)$ for all $v,w \in V$.
	Let $h \in (-\frac t 2,\frac t 2)$, then
	\begin{align*}
		&\frac 1 h \int_{\abs{h}}^{t-\abs{h}} [\fra_n(s+h,u_n,u_n)-\fra_n(s,u_n,u_n)] \ \d{s}\\
		&= \frac 1 h \int_{\abs{h}}^{t-\abs{h}} \int_\R [\fra(r+h,u_n(s),u_n(s))-\fra(r,u_n(s),u_n(s))] \rho_n(s-r) \ \d{r} \ \d{s}\\
		&\le \frac 1 h \int_{\abs{h}}^{t-\abs{h}} \int_\R [g(r+h)-g(r)] \rho_n(s-r) \ \d{r} \ \norm{u_n(s)}_V^2 \ \d{s}\\
		&\le \frac 1 h \int_{\abs{h}}^{t-\abs{h}} [g_n(s+h)-g_n(s)] \norm{u_n(s)}_V^2 \ \d{s}.
	\end{align*}
	Taking the limit $h \to 0$ yields
	\begin{equation*}
		\int_0^t \fra'_n(s,u_n,u_n) \ \d{s} \le \int_{0}^{t} g_n'(s) \norm{u_n(s)}_V^2 \ \d{s}.
	\end{equation*}
	We obtain
	\[
		\alpha \norm{u_n(t)}_V^2 \le \norm{f}_{L^2(0,t;H)}^2+M\norm{u_0}_V^2 + \int_{0}^{t} g_n'(s) \norm{u_n(s)}_V^2 \ \d{s}.
	\]
	for all $t \in [0,T]$.
	Thus by Gronwall's lemma we obtain
	\begin{equation}\label{eq:boundedinV}
	\begin{aligned}
		\tfrac 1 \alpha\norm{u_n(t)}_V^2 &\le \left[ \norm{f}_{L^2(0,t;H)}^2+M\norm{u_0}_V^2 \right] \exp\left\{ \tfrac 1 \alpha \norm{g_n'}_{L^1(0,t)} \right\}\\
			&\le \left[ \norm{f}_{L^2(0,T;H)}^2+M\norm{u_0}_V^2 \right] \exp\left\{ \tfrac 1 \alpha [g(T)-g(0)]\right\}
	\end{aligned}
	\end{equation}
	for all $t \in [0,T]$, since $g_n'(s) \ge 0$ for all $s \in [0,T]$ and
	\begin{align*}
		\int_0^t g'_n(s) \ \d{s} &= \lim_{h \to 0} \frac 1 h\int_0^t \int_\R [g(r+h)-g(r)] \rho_n(s-r) \ \d{r} \ \d{s}\\
		&= \lim_{h \to 0} \frac 1 h \int_\R [g(r+h)-g(r)]  \int_0^t \rho_n(s-r)  \ \d{s} \ \d{r}\\
		&\le \lim_{h \to 0} \frac 1 h \int_\R [g(r+h)-g(r)] \ \d{r}\\
		&\le \lim_{h \to 0} \frac 1 h \int_\R \int_{(r,r+h]} \ \d{\mu_g} \ \d{r}\\
		&= \lim_{h \to 0} \frac 1 h \int_\R \int_{s-h}^{s} \ \d{r} \ \d{\mu_g}(s)\\
		&= g(T)-g(0).
	\end{align*}
	Now combining the above estimates yields
	\begin{align*}
		 \norm{u'_n}_{L^2(0,T;H)}^2 + \norm{\A_n u_n}_{L^2(0,T;H)}^2
		\le C \left[ \norm{f}_{L^2(0,T;H)}^2+M\norm{u_0}_V^2 \right].
	\end{align*}
	where $C := 1+ \tfrac 1 \alpha [g(T)-g(0)] \exp\left\{ \tfrac 1 \alpha [g(T)-g(0)]\right\}$.

	Next we show $u_n \to u$ in $\MR(V,V')$. We set $v_n := u-u_n$, then $v_n \in \MR(V,V')$ is the solution of $v_n(0) =0$ and
	\[
		\dot v_n +\A v_n = (\A_n - \A) u_n.
	\]
	By \eqref{eq:MRV'est} it remains to show that $(\A_n - \A) u_n \to 0$ in $L^2(0,T;V')$. 
	Let $v \in L^2(0,T;V)$ with $\norm{v}_{L^2(0,T;V)}=1$. Then
	\begin{align*}
		&\int_0^T\langle (\A_n - \A) u_n, v \rangle \ \d{t}\\
			&= \int_0^T \fra_n(t, u_n, v ) - \fra(t, u_n, v ) \ \d{t}\\
			&= \int_0^T \int_\R [\fra(t-s, u_n, v)-\fra(t, u_n, v)] \ \rho_n(s) \ \d{s} \ \d{t}\\
			&\le \int_0^T \int_\R \abs{g(t)-g(t-s)} \norm{u_n(t)}_V\norm{v(t)}_V \ \rho_n(s) \ \d{s} \ \d{t}\\
			&\le \int_0^T \int_\R [g(t+\tfrac 1 n)-g(t-\tfrac 1 n)] \norm{u_n(t)}_V\norm{v(t)}_V \ \rho_n(s) \ \d{s} \ \d{t}\\
			&= \int_0^T [g(t+\tfrac 1 n)-g(t-\tfrac 1 n)] \norm{u_n(t)}_V\norm{v(t)}_V \ \d{t}\\
			&\le \norm{g(t+\tfrac 1 n)-g(t-\tfrac 1 n)}_{L^2(0,T)} \norm{u_n(t)}_{L^\infty(0,T;V)}.
	\end{align*}
	Now \eqref{eq:boundedinV} and the convergence of $g(.+h) \to g$ as $h \to 0$ in $L^2(0,T)$
	show that $(\A_n - \A) u_n \to 0$ in $L^2(0,T;V')$.
	
	Since $(u_n)$ is bounded in $H^1(0,T;H)$ and since $u_n \to u$ in $\MR(V,V')$ any subsequence of $(u_n)$ has a weakly $H^1(0,T;H)$ convergent subsequence which converges to $u$.
	Hence $u_n \rightharpoonup u$ in $H^1(0,T;H)$. 
	
	Note that $u_n(.) \to u(.)$ uniformly in $H$ since $\MR(V,V') \hookrightarrow C([0,T];H)$.
	Let $t \in [0,T]$. Then $(u_n(t))$ is bounded in $V$ by \eqref{eq:boundedinV}.
	Thus every subsequence of $(u_n(t))$ has an in $V$ weakly convergent subsequence which converges to $u(t)$.
	Hence $u_n(t) \rightharpoonup u(t)$ in $V$ and by \eqref{eq:boundedinV} we have
	\begin{equation}\label{eq:boundedinV2}
		\tfrac 1 \alpha\norm{u(t)}_V^2
			\le \left[ \norm{f}_{L^2(0,T;H)}^2+M\norm{u_0}_V^2 \right] \exp\left\{ \tfrac 1 \alpha [g(T)-g(0)]\right\} 
	\end{equation}
	for all $t \in [0,T]$. 
	Finally \eqref{eq:boundedinV2} implies that that $\MR_\fra(H) \hookrightarrow L^\infty(0,T;V)$.
	Thus by Proposition~\ref{prop:BV} it follows that $\MR_\fra(H) \hookrightarrow C([0,T];V)$.
\end{proof}

\section{A perturbation result}\label{sec:pert}
Let $V, H$ be Hilbert spaces over the field $\K$ such that $V \overset d \hookrightarrow H$.
Furthermore let
\[
	\fra \colon [0,T]\times V\times V \to \K
\]
be a symmetric, $V$-bounded, coercive non-autonomous form and $\A \sim \fra$.
We define the Banach space $W$ by 
\begin{align*}
	W = \{w \in C(0,T;V) : \A w \in L^2(0,T;H)) \}\\
	\norm{w}_W = \norm{w}_{L^\infty(0,T;V)} + \norm{\A w}_{L^2(0,T;H)}.
\end{align*}
Note that by Theorem~\ref{thm:MRp1} we have $\MR_\fra(H) \hookrightarrow W$.

\begin{theorem}\label{thm:MR}
	Let $\B \colon W \to L^2(0,T;H)$ be a bounded operator and let $b$ be a constant such that
	\begin{equation}\label{eq:Bbound}
		\norm{\B}_{\L(W, L^2(0,T;H))} \le b.
	\end{equation}
	Suppose there exist $0 <\delta \le 1$ and a positive Borel measure $\nu$ on $[0,T]$ such that 
	\begin{equation}\label{eq:Bcoercive}
		\int_0^t \Re(\A u+\B u \mid \A u )_H  \ \d{s} \ge \delta \int_0^t \norm{\A u}_H^2 \ \d{s} - \int_{[0,t)} \norm{u}_V^2 \ \d{\nu}
	\end{equation}
	for all $t \in [0,T]$ and $u \in \MR_\fra (H)$.
	Then for every $u_0\in V$, $f \in L^2(0,T;H)$ there exists a unique $u$ in $\MR_\fra(H)$ such that
	\begin{equation*}
		\left\{\begin{aligned}
			u' + \A u +\B u &= f\\
			u(0)&=u_0.
		\end{aligned}\right.
	\end{equation*}
	Moreover there exists a constant $C>0$ depending only on $c_H$, $T$, $\alpha$, $M$, $g(T)-g(0)$, $b$, $\delta$ and $\nu([0,T])$ such that
	\begin{equation}\label{eq:MR_est}
		\norm{u}_{\MR_\fra(H)}^2 \le C \left[\norm{f}_{L^2(0,T;H)}^2 + \norm{u_0}_V^2\right]. 
	\end{equation}
\end{theorem}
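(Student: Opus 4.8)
The plan is to prove the a priori estimate \eqref{eq:MR_est} for an arbitrary solution first, and then to obtain existence and uniqueness by the \emph{method of continuity}, interpolating linearly between the problem with $\B=0$, which is solved by Theorem~\ref{thm:MRp1}, and the problem with the full perturbation $\B$.

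For the a priori estimate, let $u\in\MR_\fra(H)$ solve $u'+\A u+\B u=f$, $u(0)=u_0$. Every term lies in $L^2(0,T;H)$, so I would take the $H$-inner product of the identity $\A u+\B u=f-u'$ with $\A u\in L^2(0,T;H)$ and integrate over $(0,t)$. The left-hand side is bounded below by the coercivity hypothesis \eqref{eq:Bcoercive}, and for the term $-\int_0^t\Re(u'\mid\A u)_H\,\d s$ I would invoke Proposition~\ref{prop:BV}, which applies because $\MR_\fra(H)\hookrightarrow L^\infty(0,T;V)$ by Theorem~\ref{thm:MRp1} and yields
\[
	\int_0^t 2\Re(\A u\mid u')_H\,\d s \ge \fra^-(t,u(t),u(t))-\fra^+(0,u_0,u_0)-\int_{(0,t)}\norm{u}_V^2\,\d\mu_g.
\]
Combining this with Young's inequality applied to $\int_0^t\Re(f\mid\A u)_H\,\d s$, coercivity $\fra^-(t,u(t),u(t))\ge\alpha\norm{u(t)}_V^2$ and $V$-boundedness $\fra^+(0,u_0,u_0)\le M\norm{u_0}_V^2$ leads to
\[
	\tfrac\alpha2\norm{u(t)}_V^2+\tfrac\delta2\int_0^t\norm{\A u}_H^2\,\d s \le \tfrac1{2\delta}\norm{f}_{L^2(0,T;H)}^2+\tfrac M2\norm{u_0}_V^2+\int_{[0,t)}\norm{u}_V^2\,\d\lambda
\]
with the finite positive measure $\lambda:=\tfrac12\mu_g+\nu$.

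Dropping the nonnegative $\A u$-term and applying a Gronwall lemma for the measure $\lambda$ (legitimate since $u(.)$ is continuous, hence bounded, in $V$) bounds $\norm{u}_{L^\infty(0,T;V)}^2$ by $C_1[\norm{f}_{L^2(0,T;H)}^2+\norm{u_0}_V^2]$, with $C_1$ depending only on $\alpha$, $g(T)-g(0)$ and $\nu([0,T])$. Reinserting this bound controls $\int_0^T\norm{\A u}_H^2\,\d s$, and then $\norm{u'}_{L^2(0,T;H)}\le\norm{f}_{L^2(0,T;H)}+\norm{\A u}_{L^2(0,T;H)}+\norm{\B u}_{L^2(0,T;H)}$ together with $\norm{\B u}_{L^2(0,T;H)}\le b\norm{u}_W=b(\norm{u}_{L^\infty(0,T;V)}+\norm{\A u}_{L^2(0,T;H)})$ controls $\norm{u'}_{L^2(0,T;H)}$; summing gives \eqref{eq:MR_est}. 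To run the continuation I would replace $\B$ by $\B_\tau:=\tau\B$, $\tau\in[0,1]$; since $0<\delta\le1$, a direct computation shows $\B_\tau$ satisfies \eqref{eq:Bbound} with the same $b$ and \eqref{eq:Bcoercive} with $\delta_\tau:=1-\tau(1-\delta)\ge\delta$ and $\nu_\tau:=\tau\nu$, so the estimate above holds for every $\tau$ with one and the same constant $C$.

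Finally, consider the bounded linear operators $\Phi_\tau\colon\MR_\fra(H)\to L^2(0,T;H)\times V$, $\Phi_\tau u:=(u'+\A u+\tau\B u,\,u(0))$, which depend affinely on $\tau$ and, by the uniform estimate, satisfy $\norm{u}_{\MR_\fra(H)}\le\sqrt C\,\norm{\Phi_\tau u}$ for all $u$ and all $\tau$. Since $\Phi_0$ is bijective by Theorem~\ref{thm:MRp1}, the method of continuity shows that $\Phi_1$ is bijective, which is exactly the asserted existence and uniqueness, the norm bound for $\Phi_1^{-1}$ being \eqref{eq:MR_est}. I expect the main obstacle to be the a priori estimate and, within it, the correct use of Proposition~\ref{prop:BV}: one must keep track of the one-sided versions $\fra^\pm$ and of the direction of the one-sided inequality (which is genuine because of the possible jumps of $g$), and then run Gronwall's lemma against a measure that may carry atoms, for which the half-open intervals $[0,t)$ are essential. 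Once this bound is secured uniformly in $\tau$, the passage to existence and uniqueness is routine.
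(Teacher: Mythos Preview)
Your proposal is correct and follows essentially the same route as the paper: both prove the a priori estimate by testing the equation against $\A u$, invoking Proposition~\ref{prop:BV} for the cross term $\int_0^t\Re(u'\mid\A u)_H\,\d s$, absorbing via Young's inequality, applying a measure-valued Gronwall lemma to bound $\norm{u}_{L^\infty(0,T;V)}$, then recovering $\norm{\A u}_{L^2}$ and $\norm{u'}_{L^2}$; and both conclude by the method of continuity with the family $\tau\mapsto\tau\B$. Your verification that $\tau\B$ satisfies \eqref{eq:Bcoercive} with $\delta_\tau=1-\tau(1-\delta)\ge\delta$ is slightly more explicit than the paper's one-line remark, but the arguments are otherwise the same.
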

\begin{proof}
We use the method of continuity. For $0 \le \lambda \le 1$ consider the mapping 
\[
	\Phi_\lambda \colon \MR_\fra(H) \to L^2(0,T;H) \times V
\] 
where
\[
	u \mapsto (u' + \A u + \lambda \B u, u(0)).
\]
We have $\Phi_\lambda = (1- \lambda) \Phi_0 + \lambda \Phi_1$, the mappings $\Phi_0$ and $\Phi_1$ are bounded by Proposition~\ref{prop:BV}
and by Theorem~\ref{thm:MRp1} the operator $\Phi_0$ is an isomorphism.
Now suppose the a priori-estimate
\[
	\norm{u}_{\MR_\fra(H)} \le c \norm{\Phi_\lambda(u)}_{L^2(0,T;H)\times V} \quad (u \in \MR_\fra(H), \lambda \in [0,1])
\]
holds for some $c > 0$.
Then by \cite[Theorem 5.2]{GT01} $\Phi_1$ is surjective.

Note that \eqref{eq:Bbound} and \eqref{eq:Bcoercive} hold with the same constants if we replace $\B$ by $\lambda \B$ where $0 \le \lambda \le 1$.
Hence the theorem is proved once we have established the following.
There exists a constant $C>0$ depending only on $c_H$, $T$, $\alpha$, $M$, $g(T)-g(0)$, $b$, $\delta$ and $\nu([0,T])$ such that
	\begin{equation*}
		\norm{u}_{\MR_\fra(H)}^2 \le C \left[\norm{u' + \A u + \B u}_{L^2(0,T;H)}^2 + \norm{u(0)}_V^2\right] \quad (u \in \MR_\fra(H)).
	\end{equation*}

	We may assume that $\fra = \fra^-$ and $g = g^-$.
	Let $u \in \MR_\fra(H)$. We set $f:= u' + (\A+\B)u$ and $u_0 := u(0)$.
	Then for $t \in [0,T]$ by Young's inequality for some $\varepsilon >0$ ($2st\le \varepsilon s^2 + \frac 1 \varepsilon t^2$, $s,t\in\R$), 
	\eqref{eq:Vbounded}, Proposition~\ref{prop:BV} \eqref{eq:BVformula}, 
	\eqref{eq:coercive} and \eqref{eq:Bcoercive},
	\begin{align*}
		&\tfrac 1 \varepsilon\norm{f}^2_{L^2(0,t;H)} + \varepsilon \norm{\A u}_{L^2(0,t;H)} + M \norm{u_0}_V^2\\
		& \ge 2 \int_0^t \Re(f \mid \A u)_H \ \d{s} + \fra^+(0,u_0,u_0)\\
		& = 2 \int_0^t \Re(u' \mid \A u)_H \ \d{s} + \fra^+(0,u_0,u_0) + 2 \int_0^t \Re((\A+\B)u \mid \A u)_H \ \d{s}\\
		& \ge \fra^-(t,u(t),u(t)) - \int_{(0,t)} \norm{u}_V^2 \ \d{\mu_g} + 2 \int_0^t \Re((\A+\B)u \mid \A u)_H \ \d{s}\\
		&\ge \alpha \norm{u(t)}^2_{V} - \int_{[0,t)} \norm{u}_V^2 \ \d{(\mu_g+\nu)} + 2\delta \int_0^t \norm{\A u}^2_H \ \d{s}.
	\end{align*}
	First we choose $\varepsilon = 2 \delta$, then
	\[
		\alpha \norm{u(t)}_V^2 \le \tfrac 1 {2 \delta} \norm{f}^2_{L^2(0,T;H)} + M \norm{u_0}_V^2 + \int_{[0,t)} \norm{u}_V^2 \ \d{(\mu_g+\nu)}
	\]
	for all $t \in [0,T]$. By Gronwall's inequality (see \cite[p.\ 498, Theorem~5.1]{EK86}) we obtain
	\[
		\norm{u(t)}_V^2 \le \tfrac 1 \alpha \left[ \tfrac 1 {2 \delta} \norm{f}^2_{L^2(0,T;H)} + M \norm{u_0}_V^2\right] \exp\left( \tfrac 1 \alpha (\mu_g+\nu)([0,T]) \right)
	\]
	for all $t \in [0,T]$. Thus
	\begin{equation}\label{eq:inftynorm}
		\norm{u}_{L^\infty(0,T;V)}^2 \le c_2 \left[ \tfrac 1 {2 \delta}\norm{f}^2_{L^2(0,T;H)} + M \norm{u_0}_V^2\right]
	\end{equation}
	where $c_2 := \frac 1 \alpha \exp\left( \tfrac 1 \alpha  (\mu_g+\nu)([0,T])  \right)$.
	
	Next we choose $\varepsilon= \delta$ and $t=T$, then
	\begin{multline*}
		\tfrac 1 \delta \norm{f}^2_{L^2(0,T;H)} + M \norm{u_0}_V^2 \ge\delta \norm{\A u}^2_{L^2(0,T;H)}
		  - (\mu_g+\nu)([0,T])\norm{u}_{L^\infty(0,T;V)}^2.
	\end{multline*}
	Thus by \eqref{eq:inftynorm} 
	\begin{equation}\label{eq:c_3est}
		c_3 \left[\tfrac 1 \delta \norm{f}^2_{L^2(0,T;H)} + M \norm{u_0}_V^2 \right] \ge\delta \norm{\A u}^2_{L^2(0,T;H)}.
	\end{equation}
	where $c_3 := 1+ c_2 (\mu_g+\nu)([0,T])$.
	
	Finally observe that $u' = f - (\A + \B) u$, thus by \eqref{eq:Bbound} and \eqref{eq:inftynorm}
	\begin{align*}
		\norm{u'}_{L^2(0,T;H)}
		& \le \norm{f}_{L^2(0,T;H)} + \norm{\A u}_{L^2(0,T;H)} +\norm{\B u}_{L^2(0,T;H)}\\
		&\le \norm{f}_{L^2(0,T;H)} + (1 + b) \norm{\A u}_{L^2(0,T;H)} + b \norm{u}_{L^\infty(0,T;V)}\\
		&\le \norm{f}_{L^2(0,T;H)} + (1 + b) \norm{\A u}_{L^2(0,T;H)} \\
		&\quad + b c_2^{1/2} \left[ \tfrac 1 {2 \delta}\norm{f}^2_{L^2(0,t;H)} + M \norm{u_0}_V^2\right]^{1/2}.
	\end{align*}
	This estimate together with \eqref{eq:c_3est} proves the claim.
\end{proof}

\begin{corollary}\label{cor:pert}
	Let $B\colon [0,T] \to \L(H)$ and $C \colon [0,T]\to \L (V,H)$ such that $B(.)v$ and $C(.)w$ are weakly measurable for all $v \in H$, $w \in V$.
	Suppose there exist constants $\beta_0 > 0$ and $\beta_1$ such that
	$\norm{B(t)}_{\L(H)} \le \beta_1$ for all $t \in [0,T]$ and
	\begin{equation*}
		(B(t) v \mid v)_H \ge \beta_0 \norm{v}_H^2 \quad (t\in [0,T], v \in H).
	\end{equation*}
	Moreover suppose there exists an integrable function $h \colon [0,T] \to [0,\infty)$ such that
	$\norm{C(t)}^2_{\L(V,H)}\le h(t)$ for all $t \in [0,T]$.
	Then for every $u_0\in V$, $f \in L^2(0,T;H)$ there exists a unique $u$ in $\MR_\fra(H)$ such that
	\begin{equation*}
		\left\{\begin{aligned}
			u' + B\A u + C u &= f\\
			u(0)&=u_0.
		\end{aligned}\right.
	\end{equation*}
		Moreover there exists a constant $C>0$ depending only on $c_H$, $T$, $\alpha$, $M$, $g(T)-g(0)$, $\beta_0$, $\beta_1$ and $\norm{h}_{L^1(0,T)}$ such that
	\begin{equation*} 
		\norm{u}_{\MR_\fra(H)}^2 \le C \left[\norm{f}_{L^2(0,T;H)}^2 + \norm{u_0}_V^2\right]. 
	\end{equation*}
\end{corollary}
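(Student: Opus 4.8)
The plan is to recast the problem as a special case of Theorem~\ref{thm:MR}. Writing $B$ and $C$ also for the induced multiplication operators $(Bw)(t)=B(t)w(t)$ and $(Cw)(t)=C(t)w(t)$, I define
\[
	\B w := (B - \mathrm{Id})\A w + C w \qquad (w \in W).
\]
With this choice $\A u + \B u = B\A u + Cu$, so the equation $u' + \A u + \B u = f$ of Theorem~\ref{thm:MR} is identical to $u' + B\A u + Cu = f$. Consequently existence, uniqueness and the estimate \eqref{eq:MR_est} transfer verbatim; moreover the final constant will depend only on the quantities listed in the corollary, because the parameters $b$, $\delta$ and $\nu([0,T])$ that feed into Theorem~\ref{thm:MR} are, as shown below, controlled by $\beta_0$, $\beta_1$ and $\norm{h}_{L^1(0,T)}$. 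It therefore remains to verify that $\B$ is a well-defined bounded operator $W \to L^2(0,T;H)$ satisfying \eqref{eq:Bbound} and the coercivity inequality \eqref{eq:Bcoercive}.

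First I would settle measurability and boundedness. Since $H$ is separable, weak measurability of $B(\cdot)v$ together with the uniform bound $\norm{B(t)}_{\L(H)}\le\beta_1$ yields, via Pettis' theorem and approximation of $w$ by simple functions, that $t\mapsto B(t)\A w(t)$ is strongly measurable; the same reasoning applies to $Cw$. For the norm estimate, $\norm{(B-\mathrm{Id})\A w}_{L^2(0,T;H)} \le (\beta_1+1)\norm{\A w}_{L^2(0,T;H)}$, while $\norm{C(t)w(t)}_H \le h(t)^{1/2}\norm{w(t)}_V$ gives $\norm{Cw}_{L^2(0,T;H)} \le \norm{h}_{L^1(0,T)}^{1/2}\norm{w}_{L^\infty(0,T;V)}$. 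Hence \eqref{eq:Bbound} holds with $b := \beta_1 + 1 + \norm{h}_{L^1(0,T)}^{1/2}$, and $\B$ is bounded on $W$ because $\MR_\fra(H)\hookrightarrow W$ by Theorem~\ref{thm:MRp1}.

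The main point is the coercivity condition \eqref{eq:Bcoercive}. For $u \in \MR_\fra(H)$ one has $\A u(s)\in H$, so the positivity hypothesis applied to $v=\A u(s)$ gives $\Re(B(s)\A u(s)\mid\A u(s))_H \ge \beta_0\norm{\A u(s)}_H^2$. For the lower-order term I would use Cauchy--Schwarz followed by Young's inequality with parameter $\beta_0$, namely $\Re(C u\mid \A u)_H \ge -h(s)^{1/2}\norm{u(s)}_V\norm{\A u(s)}_H \ge -\tfrac{\beta_0}{2}\norm{\A u(s)}_H^2 - \tfrac{1}{2\beta_0}h(s)\norm{u(s)}_V^2$. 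Adding these two estimates and integrating yields
\[
	\int_0^t \Re(\A u + \B u\mid \A u)_H\,\d s \ge \tfrac{\beta_0}{2}\int_0^t\norm{\A u}_H^2\,\d s - \tfrac{1}{2\beta_0}\int_0^t h\,\norm{u}_V^2\,\d s,
\]
which is exactly \eqref{eq:Bcoercive} with $\delta := \min\{\beta_0/2,1\}$ (for $\beta_0/2>1$ one simply weakens the coefficient of the nonnegative term $\int_0^t\norm{\A u}_H^2$) and $\nu$ the absolutely continuous measure $\d\nu = \tfrac{1}{2\beta_0}h\,\d s$; since $\nu$ charges no point the interval $[0,t)$ may replace $[0,t]$, and $\nu([0,T]) = \tfrac{1}{2\beta_0}\norm{h}_{L^1(0,T)}$. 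Theorem~\ref{thm:MR} now applies and delivers the assertion. The only genuinely delicate step is this coercivity estimate: one must isolate the dominant term $\Re(B\A u\mid\A u)_H$, use the uniform positivity of $B$ to bound it below by $\beta_0\norm{\A u}_H^2$, and then absorb the $C$-contribution into a small fraction of $\norm{\A u}_H^2$ plus an $h$-weighted $V$-term, the latter being precisely what the measure $\nu$ in Theorem~\ref{thm:MR} was designed to accommodate.
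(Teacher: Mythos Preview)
Your proof is correct and follows exactly the approach of the paper: define $\B w := (B-\mathrm{Id})\A w + Cw$ and invoke Theorem~\ref{thm:MR}. The paper's proof is a one-line reduction leaving the verification of \eqref{eq:Bbound} and \eqref{eq:Bcoercive} to the reader, whereas you have carried out that verification explicitly (and correctly).
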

\begin{proof}
	We define the operator $\B \colon W \to L^2(0,T;H)$ by $\B w := (B-1) \A w + C w$.
	The operators $\A$ and $\B$ satisfy the conditions of Theorem~\ref{thm:MR} and $\A w + \B w = B \A w + Cw$ in $L^2(0,T;H)$ for all $w \in W$.
\end{proof}

\section{An Aubin--Lions lemma for $\MR_\fra(H)$}

Let $V, H$ be Hilbert spaces over the field $\K$ such that $V \overset d \hookrightarrow H$.
\begin{theorem}\label{thm:A-L}
	Let
\[
	\fra \colon [0,T]\times V\times V \to \K
\]
be a $V$-bounded, coercive non-autonomous form and $\A \sim \fra$.
Suppose $V$ is compactly embedded in $H$.
Then $\MR_\fra(H)$ is compactly embedded in $L^2(0,T;V)$.
\end{theorem}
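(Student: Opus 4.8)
The plan is to split the argument into two stages: first invoke a classical compactness lemma to obtain convergence of a bounded sequence in the \emph{weaker} topology of $L^2(0,T;H)$, and then use coercivity together with the extra regularity $\A u \in L^2(0,T;H)$ to \emph{upgrade} this to convergence in the stronger norm of $L^2(0,T;V)$. As a preliminary I would record the continuous embedding $\MR_\fra(H) \hookrightarrow L^2(0,T;V) \cap H^1(0,T;H)$. The $H^1(0,T;H)$-part is immediate from the definition of $\norm{\cdot}_{\MR_\fra(H)}$, which already controls $\norm{u'}_{L^2(0,T;H)}$. For the $L^2(0,T;V)$-part, coercivity \eqref{eq:coercive} and the identity $\langle \A(t)u(t), u(t)\rangle = (\A u(t) \mid u(t))_H$ (valid because $\A u(t) \in H$ for a.e.\ $t$) give
\[
	\alpha \norm{u(t)}_V^2 \le \Re \fra(t,u(t),u(t)) = \Re (\A u(t) \mid u(t))_H \le c_H \norm{\A u(t)}_H \norm{u(t)}_V,
\]
hence $\norm{u(t)}_V \le (c_H/\alpha)\norm{\A u(t)}_H$, which recovers the embedding into $L^2(0,T;V)$ noted in Section~\ref{sec:forms}.

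For the first stage, since $V$ is compactly embedded in $H$, the classical Aubin--Lions--Simon lemma applied to the triple $V \hookrightarrow\hookrightarrow H \hookrightarrow H$ shows that $L^2(0,T;V) \cap H^1(0,T;H)$ is compactly embedded in $L^2(0,T;H)$. Consequently, any sequence $(u_n)$ that is bounded in $\MR_\fra(H)$, say $\norm{u_n}_{\MR_\fra(H)} \le C$, is bounded in $L^2(0,T;V)\cap H^1(0,T;H)$ and therefore admits a subsequence, which I do not relabel, converging in $L^2(0,T;H)$; in particular $(u_n)$ is Cauchy in $L^2(0,T;H)$.

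For the second, crucial stage, I would exploit the regularity $\A u_n \in L^2(0,T;H)$ to turn this into an $L^2(0,T;V)$-Cauchy property. For indices $n,m$ set $w := u_n - u_m$, so that $\A w(t) = \A u_n(t) - \A u_m(t) \in H$ for a.e.\ $t$. Exactly as in the preliminary computation,
\[
	\alpha \norm{w(t)}_V^2 \le \Re\fra(t,w(t),w(t)) = \Re(\A w(t)\mid w(t))_H \le \norm{\A w(t)}_H \norm{w(t)}_H .
\]
Integrating over $[0,T]$ and applying the Cauchy--Schwarz inequality in time yields
\[
	\alpha \norm{w}_{L^2(0,T;V)}^2 \le \norm{\A w}_{L^2(0,T;H)} \norm{w}_{L^2(0,T;H)} \le 2C \norm{u_n - u_m}_{L^2(0,T;H)},
\]
where I used $\norm{\A w}_{L^2(0,T;H)} \le \norm{\A u_n}_{L^2(0,T;H)} + \norm{\A u_m}_{L^2(0,T;H)} \le 2C$. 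Since the right-hand side tends to $0$ as $n,m\to\infty$ by the first stage, $(u_n)$ is Cauchy, hence convergent, in $L^2(0,T;V)$, which establishes the compact embedding.

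I do not expect a serious obstacle: the entire content of the proof is the observation that coercivity, combined with the uniform bound on $\A u_n$ in $L^2(0,T;H)$, converts the already-available $L^2(0,T;H)$-Cauchy property into an $L^2(0,T;V)$-Cauchy property, the bound on $\A u_n$ absorbing the loss of one power of the norm. The only points demanding a little care are checking that the hypotheses of the Aubin--Lions--Simon lemma are genuinely met in the degenerate case $X = X_1 = H$, and that symmetry of $\fra$ is nowhere needed, so that the result holds for merely $V$-bounded and coercive forms as stated.
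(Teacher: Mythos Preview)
Your proposal is correct and follows essentially the same route as the paper: first invoke the classical Aubin--Lions lemma to obtain a subsequence that is Cauchy in $L^2(0,T;H)$, then use the interpolation-type inequality $\alpha\norm{w}_{L^2(0,T;V)}^2 \le \norm{\A w}_{L^2(0,T;H)}\norm{w}_{L^2(0,T;H)}$ together with the uniform bound on $\norm{\A u_n}_{L^2(0,T;H)}$ to upgrade this to a Cauchy property in $L^2(0,T;V)$. The paper states this last step more tersely, while you spell out explicitly that it is applied to the differences $w=u_n-u_m$; both arguments are the same.
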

\begin{proof}
	Let $v \in \MR_\fra(H)$. Then
	\begin{multline}\label{eq:interpolationestimate}
		\alpha \norm{v}_{L^2(0,T;V)}^2 \le \int_0^T \Re \fra(t,v,v) \ \d{t} = \int_0^T \Re (\A v \mid v)_H \ \d{t}
			\\ \le \norm{\A v}_{L^2(0,T;H)} \norm{v}_{L^2(0,T;H)}.
	\end{multline}
	Moreover, if $c_H$ is the norm of the embedding of $V$ into $H$, then the above estimate implies
		\[
			\norm{v}_{L^2(0,T;V)} \le \frac {c_H}\alpha \norm{\A v}_{L^2(0,T;H)}.
		\]
	
	Let $(u_n)_{n\in\N}\subset \MR_\fra(H)$ be a bounded sequence.
	By the classical Aubin--Lions lemma (see \cite[Corollary~5]{Sim87}) we have that $L^2(0,T;V)\cap H^1(0,T;H)$ is compactly embedded in $L^2(0,T;H)$.
	Thus there exists a subsequence of $(u_n)_{n\in\N}$ which is Cauchy in $L^2(0,T;H)$.
	Finally by \eqref{eq:interpolationestimate} and the boundedness of $(u_n)_{n\in\N}$ in $\MR_\fra(H)$ we obtain that this subsequence is also Cauchy in $L^2(0,T;V)$.
	Hence $\MR_\fra(H)$ is compactly embedded in $L^2(0,T;V)$.
\end{proof}

\begin{corollary}\label{cor:A-L}
Let
\[
	\fra \colon [0,T]\times V\times V \to \K
\]
be a symmetric, $V$-bounded, coercive non-autonomous form of bounded variation and $\A \sim \fra$.
Suppose $V$ is compactly embedded in $H$.
Then $\MR_\fra(H)$ is compactly embedded in $L^p(0,T;V)$ for $1\le p<\infty$.
\end{corollary}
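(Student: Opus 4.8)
The plan is to upgrade the $L^2$-compactness of Theorem~\ref{thm:A-L} to an arbitrary exponent $p \in [1,\infty)$ by interpolating it against the bound in $L^\infty(0,T;V)$ that Theorem~\ref{thm:MRp1} supplies at no extra cost. Concretely, since $\fra$ is symmetric, $V$-bounded, coercive and of bounded variation, Theorem~\ref{thm:MRp1} gives $\MR_\fra(H) \hookrightarrow C([0,T];V) \hookrightarrow L^\infty(0,T;V)$; as $(0,T)$ has finite measure this already yields the continuous embedding $\MR_\fra(H) \hookrightarrow L^p(0,T;V)$ for every $1 \le p < \infty$, so only compactness remains to be shown.

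First I would fix a bounded sequence $(u_n) \subset \MR_\fra(H)$ and set $R := \sup_n \norm{u_n}_{L^\infty(0,T;V)}$, which is finite by the embedding just recalled. Theorem~\ref{thm:A-L} (whose hypotheses, $V$-boundedness and coercivity, are part of the present assumptions) provides a subsequence, still written $(u_n)$, that is Cauchy in $L^2(0,T;V)$. For $1 \le p \le 2$ this subsequence is then automatically Cauchy in $L^p(0,T;V)$, because on the finite interval $(0,T)$ one has $L^2(0,T;V) \hookrightarrow L^p(0,T;V)$. For $2 < p < \infty$ I would invoke the elementary interpolation estimate
\[
	\norm{w}_{L^p(0,T;V)}^p = \int_0^T \norm{w(t)}_V^{p-2}\, \norm{w(t)}_V^2 \ \d{t} \le \norm{w}_{L^\infty(0,T;V)}^{p-2} \, \norm{w}_{L^2(0,T;V)}^2,
\]
that is $\norm{w}_{L^p(0,T;V)} \le \norm{w}_{L^2(0,T;V)}^{2/p}\, \norm{w}_{L^\infty(0,T;V)}^{1-2/p}$. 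Applying it to $w = u_n - u_m$, the last factor is at most $(2R)^{1-2/p}$ while $\norm{u_n - u_m}_{L^2(0,T;V)} \to 0$, so $(u_n)$ is Cauchy, hence convergent, in $L^p(0,T;V)$. Since the bounded sequence was arbitrary, $\MR_\fra(H)$ is compactly embedded in $L^p(0,T;V)$.

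I do not expect a genuine obstacle: the argument is a one-line interpolation resting entirely on the two results above. The single point meriting care is that the interpolation is carried out in the \emph{time} variable, by applying the scalar bound to the measurable function $t \mapsto \norm{w(t)}_V$, rather than in the spatial variable; no further structure of $V$ or $H$ is used.
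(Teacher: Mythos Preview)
Your proof is correct and rests on exactly the same two ingredients as the paper: Theorem~\ref{thm:MRp1} for the uniform bound in $L^\infty(0,T;V)$ and Theorem~\ref{thm:A-L} for compactness in $L^2(0,T;V)$. The only difference lies in the passage from $L^2$- to $L^p$-convergence. The paper extracts from the $L^2$-convergent subsequence a further subsequence converging $t$-a.e.\ in $V$ and then applies the dominated convergence theorem with the $L^\infty(0,T;V)$ bound as dominating function; you instead apply the interpolation inequality $\norm{w}_{L^p} \le \norm{w}_{L^2}^{2/p}\norm{w}_{L^\infty}^{1-2/p}$ directly to differences. Your route is marginally cleaner in that it avoids the second subsequence extraction and gives the Cauchy property for the \emph{whole} first subsequence; the paper's route is equally standard. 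Both are complete.
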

\begin{proof}
	Let $(u_n)_{n\in\N}\subset \MR_\fra(H)$ be a bounded sequence. 
	Then $(u_n)_{n\in\N}$ is also bounded in $C([0,T];V)$ and consequently in $L^p(0,T;V)$,
	since $\MR_\fra(H) \hookrightarrow C([0,T];V)$ by Theorem~\ref{thm:MRp1}.
	The sequence $(u_n)_{n\in\N}$ has a $L^2(0,T;V)$ convergent subsequence by Theorem~\ref{thm:A-L}. 
	This subsequence has a $t$-a.e.\ convergent subsequence.
	Hence by the dominated convergence theorem that this subsequence converges in $L^p(0,T;V)$.
\end{proof}

\section{Applications}
This section is devoted to some applications of the results given in the previous sections.
We give examples illustrating the theory without seeking for generality. 
The first two examples are similar to the examples given in \cite{ADLO14}. 
Here we have improved the condition on the time regularity.
The third example is inspired by \cite{AC10} and \cite{ADLO14}.
Compared to \cite{AC10} we consider a non-autonomous form and do not assume that the domain of the restriction of $\A(t)$ to $H$ is contained in $H^2_{loc}(\Omega)$,
but we assume that $\Omega$ is a abounded Lipschitz domain. It is possible to generalize our result to more general domains in a similar manner.
In relation to \cite{ADLO14} we weaken the condition on the time regularity of the non-autonomous form and allow a semilinear term.

Let $\Omega \subset \R^d$ be a bounded domain, where $d \in \N$.
In this section we always consider the Hilbert space $H:=L^2(\Omega)$ over the field $\K = \R$ or $\C$.

\subsection*{Elliptic operator with time dependent coefficients}
For simplicity we consider Dirichlet boundary conditions in this example.
Let $V=H^1_0(\Omega)$. 
Let $a_{jk} \colon [0,T] \times \Omega \to \K$ be measurable and bounded, where $j,k \in \{1,\dots d\}$.
Suppose $a_{jk}=\overline{a_{kj}}$ for all $j,k \in \{1,\dots d\}$ and there exists a constant $\alpha >0$, such that
\begin{equation*}
	\sum_{j,k=1}^d a_{jk} \xi_j \overline\xi_k \ge \alpha \abs{\xi}^2 \quad (\xi=(\xi_1, \dots, \xi_d) \in \K^d).
\end{equation*}
Moreover we suppose that there exists a bounded and non-decreasing function $g \colon [0,T] \to \R$ such that
\begin{equation*}
	\abs{a_{jk}(t,x)-a_{jk}(s,x)} \le g(t)-g(s) \quad (0\le t < s \le T, x \in \Omega)
\end{equation*}
for all $j,k \in \{1,\dots d\}$.
Let $b_j, c \in L^1(0,T;L^\infty(\Omega))$, where $j \in \{1,\dots d\}$.
Then we have the following.
\begin{proposition}
	For every $f \in L^2(0,T;H)$, $u_0 \in V$ there exists a unique $u \in C([0,T];V) \cap H^1(0,T;H)$ such that
	\begin{equation*}
		\left\{  \begin{aligned}
          		&u'(t)  -  \sum_{j,k=1}^d \partial_k (a_{jk}(t) \partial_j u(t) ) + b_j(t) \partial_j u(t) +c(t) u(t)=  f(t)\\
                       &u(0) =u_0.
		\end{aligned} \right.
	\end{equation*}
\end{proposition}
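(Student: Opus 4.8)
The plan is to treat the principal (second-order) part by the form machinery of Theorem~\ref{thm:MRp1} and to absorb the lower-order terms as a perturbation covered by Corollary~\ref{cor:pert}, with the multiplicative factor taken equal to the identity.

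First I would introduce the principal form
\[
	\fra(t,u,v) := \sum_{j,k=1}^d \int_\Omega a_{jk}(t,x)\, \partial_j u(x)\, \overline{\partial_k v(x)} \ \d x \quad (u,v \in V),
\]
on $V = H^1_0(\Omega)$, $H = L^2(\Omega)$, and verify that it is symmetric, $V$-bounded, coercive and of bounded variation. Symmetry follows from $a_{jk} = \overline{a_{kj}}$ and $V$-boundedness from the boundedness of the coefficients. Coercivity is obtained from the ellipticity hypothesis together with Poincaré's inequality on the bounded domain $\Omega$, which gives $\Re \fra(t,u,u) \ge \alpha \norm{\nabla u}_{L^2(\Omega)}^2 \ge \alpha' \norm{u}_V^2$. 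For \eqref{eq:Mpunkt} one estimates, for $0 \le s \le t \le T$,
\[
	\abs{\fra(t,u,v)-\fra(s,u,v)} \le [g(t)-g(s)] \sum_{j,k=1}^d \int_\Omega \abs{\partial_j u}\,\abs{\partial_k v} \ \d x \le d\,[g(t)-g(s)] \norm{u}_V \norm{v}_V,
\]
so that after replacing $g$ by $d\,g$ (still bounded and non-decreasing) the form is of bounded variation. The operator $\A(t)$ associated with $\fra(t,\cdot,\cdot)$, read on $H$, is the Dirichlet realization of $-\sum_{j,k}\partial_k(a_{jk}(t)\partial_j\,\cdot\,)$.

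Next I would encode the lower-order terms by setting $B(t) := \mathrm{id}_H$ (so that $\beta_0 = \beta_1 = 1$) and $C(t)w := \sum_{j=1}^d b_j(t)\partial_j w + c(t) w$. One checks that $C(t) \in \L(V,H)$, with $\norm{C(t)}_{\L(V,H)}$ bounded by a constant times $\sum_j \norm{b_j(t)}_{L^\infty} + \norm{c(t)}_{L^\infty}$, and that $t \mapsto B(t)v$ and $t \mapsto C(t)w$ are weakly measurable. With the identity factor one has $\B w := (B - \mathrm{id})\A w + Cw = Cw$, so $\A w + \B w$ reproduces exactly the differential expression in the statement. Granting the hypotheses of Corollary~\ref{cor:pert}, that corollary yields a unique $u \in \MR_\fra(H)$ with $u' + \A u + Cu = f$ and $u(0) = u_0$, and Theorem~\ref{thm:MRp1} gives $\MR_\fra(H) \hookrightarrow C([0,T];V) \cap H^1(0,T;H)$, which is the asserted regularity.

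The verification of the structural properties of $\fra$ and the identification of $\A(t)$ are routine. I expect the delicate step to be the integrability hypothesis of Corollary~\ref{cor:pert}, namely the existence of $h \in L^1(0,T)$ with $\norm{C(t)}^2_{\L(V,H)} \le h(t)$; this is exactly where the time-regularity of $b_j$ and $c$ is used, and internally it amounts to verifying the coercivity inequality \eqref{eq:Bcoercive}. There the zeroth-order part $cu$ is absorbed by Young's inequality into $\delta\int_0^t \norm{\A u}_H^2 \ \d s$ plus a finite-measure remainder $\int_{[0,t)} \norm{u}_V^2 \ \d\nu$, whereas the first-order part $\sum_j b_j \partial_j u$ is of the same order as a square root of $\A$ and must be controlled through the interpolation bound $\alpha \norm{u}_V^2 \le \norm{\A u}_H \norm{u}_H$ (equivalently $\norm{u}_V \le \tfrac{c_H}{\alpha} \norm{\A u}_H$); balancing these estimates so that the coefficient of $\norm{\A u}_H^2$ stays below $\delta$ while the remaining measure is finite is the crux of reducing the problem to Corollary~\ref{cor:pert}.
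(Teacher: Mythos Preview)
Your approach is the paper's: define the principal second-order form, take $B=\mathrm{id}_H$, package the lower-order terms into $C(t)$, and invoke Corollary~\ref{cor:pert}. The only cosmetic difference is that the paper adds $(v\mid w)_H$ to $\fra$ (and correspondingly replaces $c$ by $c-1$ in the definition of $C$) so that coercivity is immediate, whereas you obtain coercivity via Poincar\'e's inequality on the bounded domain; either route works on $V=H^1_0(\Omega)$.

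One remark on your last paragraph: you are over-thinking the reduction. Corollary~\ref{cor:pert} already handles the verification of \eqref{eq:Bcoercive} internally; all you need to supply is an $h\in L^1(0,T)$ with $\norm{C(t)}^2_{\L(V,H)}\le h(t)$, not to rebalance \eqref{eq:Bcoercive} by hand. In particular, your proposed treatment of the first-order terms via the bound $\norm{u}_V\le \tfrac{c_H}{\alpha}\norm{\A u}_H$ would, after inserting it into $\Re(b_j\partial_j u\mid \A u)_H$, produce a term $\tfrac{c_H}{\alpha}\norm{b_j(t)}_{L^\infty}\norm{\A u}_H^2$ with no small parameter, which cannot be absorbed into $\delta\norm{\A u}_H^2$ without an unavailable smallness assumption on $b_j$. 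Simply record $\norm{C(t)}_{\L(V,H)}\lesssim \sum_j\norm{b_j(t)}_{L^\infty}+\norm{c(t)}_{L^\infty}$ and feed the square as $h$ into Corollary~\ref{cor:pert}; the paper's own proof does no more than this.
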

Note that the domain of the elliptic operator is time dependent.
\begin{proof}
	We define the non-autonomous form $\fra \colon [0,T] \times V \times V\to \K$ by 
	\begin{equation*}
		\fra(t,v,w) = \sum_{j,k=1}^d  a_{jk} \partial_j v \,\overline{\partial_k w} + (v \mid w)_H
	\end{equation*}
	and $C \colon [0,T]\to \L (V,H)$ by $Cv = \sum_{j=1}^d b_j(t) \partial_j v + (c-1) v$.
	Then $\fra$ and $C$ satisfy the conditions of Corollary~\ref{cor:pert}.
\end{proof}

\subsection*{Time dependent Robin boundary conditions}
Let $\Omega \subset \R^d$ be a  bounded domain  with Lipschitz boundary $\Gamma$. Let $V=H^1(\Omega)$.
Let  
\[
	\beta\colon [0,T]  \times \Gamma \to \K
\]
be a measurable function such that there exists a bounded and non-decreasing function $g \colon [0,T] \to \R$ such that
\begin{equation}\label{eq:RobinBV}
 	\lvert \beta(t,x) - \beta(s, x) \rvert \le g(t)-g(s) \quad (0\le t < s \le T, x \in \Gamma).
\end{equation}
\begin{proposition}
	For every $f \in L^2(0,T;H)$, $u_0 \in V$ there exists a unique $u \in C([0,T];V) \cap H^1(0,T;H)$ such that
	\begin{equation*}
\left\{  \begin{aligned}
           &u'(t)  - \Delta u(t)  = f(t)\\
           &u(0)=u_0\\
           &\partial_\nu u(t) + \beta(t,.) u = 0   \text{ on } \Gamma
                         \end{aligned} \right.
\end{equation*}
\end{proposition}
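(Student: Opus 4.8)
The plan is to realize the time-dependent Robin Laplacian as the operator associated with a symmetric, $V$-bounded, coercive non-autonomous form of bounded variation, up to a coercivity shift which is then removed via Corollary~\ref{cor:pert}. Concretely, on $V=H^1(\Omega)$ I would introduce the form
\[
	\fra(t,v,w) = \int_\Omega \nabla v \cdot \overline{\nabla w} \ \d x + \int_\Gamma \beta(t,.)\, v\, \overline{w} \ \d\sigma + \omega\, (v \mid w)_H,
\]
where $\d\sigma$ is the surface measure on $\Gamma$ and $\omega>0$ is fixed below. Since $\beta(t,.)$ is real-valued, $\fra$ is symmetric. Using the continuity of the trace $H^1(\Omega)\to L^2(\Gamma)$ (with norm $c_\Gamma$), valid as $\Omega$ is a bounded Lipschitz domain, together with the boundedness of $\beta$, the boundary integral is controlled by $\norm v_V\norm w_V$, so $\fra$ is $V$-bounded. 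As $\fra(t,.,.)$ and $\fra(s,.,.)$ differ only in the boundary term, \eqref{eq:RobinBV} and the trace estimate give
\[
	\abs{\fra(t,v,w) - \fra(s,v,w)} \le [g(t)-g(s)] \int_\Gamma \abs v\, \abs w \ \d\sigma \le c_\Gamma^2\, [g(t)-g(s)]\, \norm v_V \norm w_V
\]
for $0\le s\le t\le T$, so $\fra$ is of bounded variation with the bounded non-decreasing function $c_\Gamma^2 g$.

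The main point is coercivity, and this is where the Lipschitz regularity of $\Omega$ is essential. Here I would invoke the interpolation (Ehrling-type) trace inequality: for every $\varepsilon>0$ there is $C_\varepsilon$ with $\norm{v}_{L^2(\Gamma)}^2 \le \varepsilon \norm{\nabla v}_{L^2(\Omega)}^2 + C_\varepsilon \norm v_H^2$ for all $v\in H^1(\Omega)$. Since $\beta$ is real-valued,
\[
	\Re \fra(t,v,v) \ge \norm{\nabla v}_{L^2(\Omega)}^2 - \norm{\beta}_\infty \norm{v}_{L^2(\Gamma)}^2 + \omega \norm v_H^2,
\]
and choosing $\varepsilon$ with $\varepsilon\norm\beta_\infty = \tfrac12$ and then $\omega := \norm\beta_\infty C_\varepsilon + \tfrac12$ yields $\Re\fra(t,v,v) \ge \tfrac12\norm v_V^2$ uniformly in $t$. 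Thus $\fra$ meets all hypotheses of Theorem~\ref{thm:MRp1}.

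It remains to remove the shift by $\omega$. Let $\A\sim\fra$; an integration by parts shows that the operator $\A_0 := \A - \omega$ associated on $H$ with the unshifted form is exactly the Robin Laplacian, i.e.\ $\A_0 u = -\Delta u$ under the boundary condition $\partial_\nu u + \beta(t,.)u = 0$ on $\Gamma$. I would therefore apply Corollary~\ref{cor:pert} with $B\equiv I$ (so $\beta_0=\beta_1=1$) and $C\equiv -\omega I\in\L(V,H)$ (so $\norm{C(t)}_{\L(V,H)}\le \omega c_H$ and $h\equiv\omega^2 c_H^2\in L^1(0,T)$). Then $B\A w + Cw = \A w - \omega w = \A_0 w$, so the problem $u'+\A_0 u = f$, $u(0)=u_0$, is precisely the one solved by Corollary~\ref{cor:pert}. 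It produces a unique $u\in\MR_\fra(H)$, and since $\MR_\fra(H)\hookrightarrow C([0,T];V)$ by Theorem~\ref{thm:MRp1} while $\MR_\fra(H)\subset H^1(0,T;H)$ by definition, the resulting $u$ lies in $C([0,T];V)\cap H^1(0,T;H)$, as claimed.

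The step I expect to be the main obstacle is coercivity: because the sign of $\beta$ is not controlled, the (possibly negative) boundary term must be absorbed into the Dirichlet energy through the trace interpolation inequality, and it is this—relying on $\Omega$ being Lipschitz—that forces the shift $\omega(\cdot\mid\cdot)_H$ and hence the recourse to the perturbation result rather than a direct application of Theorem~\ref{thm:MRp1}.
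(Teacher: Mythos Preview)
Your proposal is correct and follows essentially the same approach as the paper: define the Robin form, verify symmetry, $V$-boundedness, bounded variation via \eqref{eq:RobinBV} and the trace estimate, obtain coercivity only after adding a shift $\omega(\cdot\mid\cdot)_H$ via the trace interpolation inequality \eqref{trace-comp}, and then remove the shift by Corollary~\ref{cor:pert}. The only cosmetic difference is that you build the shift $\omega$ into the definition of $\fra$ from the start and spell out the choice $B=I$, $C=-\omega I$ in Corollary~\ref{cor:pert}, whereas the paper phrases this as ``$\fra$ is quasi-coercive'' and leaves those details implicit.
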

We denote by $\sigma$ the $(d-1)$-dimensional Hausdorff measure on $\Gamma$
and define the normal derivative in the following way.
Let $v \in V$ such that $\Delta v \in H$ and let $h \in L^2(\Gamma, \d \sigma)$.  
Then $\partial_\nu v = h$ by definition if
$\int_\Omega \nabla v \nabla w + \int_\Omega \Delta v w = \int_\Gamma h \T w \, \d \sigma$ for all $w \in V$,
where $\T\colon V \to L^2(\Gamma, \d \sigma)$ denotes the trace operator.

Note that we could replace the Laplacian with an elliptic operator as in the previous example.
\begin{proof}
We consider the symmetric form
\[
	\fra\colon [0,T] \times V \times V \to \K
\]
defined by 
\begin{equation*}
	\fra(t, v, w) = \int_\Omega \nabla v \nabla\overline{w}\ \d x + \int_\Gamma \beta(t, .) \T v \overline{\T w}\ \d\sigma.
\end{equation*}
Where $\operatorname{T}\colon V \to L^2(\Gamma, \d \sigma)$ denotes the trace operator and $\sigma$ the $(d-1)$-dimensional Hausdorff measure on $\Gamma$.
The form $\fra$ is symmetric and $V$-bounded.
Moreover $\fra$ is quasi-coercive; i.e., $\fra +\lambda (.\mid .)_H$ is coercive for some $\lambda > 0$.
This is a consequence of the inequality 
\begin{equation}\label{trace-comp}
\int_\Gamma \lvert u \rvert^2 \ \d\sigma \le \epsilon \norm u_{H^1}^2 + c_\epsilon \norm u_{L^2(\Omega)}^2,
\end{equation}
which is valid for all $\epsilon > 0$ ($c_\epsilon$ is a constant depending on $\epsilon$). 
Finally $\fra$ is of bounded variation by \eqref{eq:RobinBV}.
Now the proposition follows by Corollary~\ref{cor:pert}.
\end{proof}

\subsection*{Existence of a quasi-linear problem}
For this example we consider $\Omega$, $H$, $V$ and $\fra$ from one of the previous examples. 
We set $\A \sim \fra$.
Note that $H^1_0(\Omega)$ is compactly embedded in $L^2(\Omega)$ and
$H^1(\Omega)$ is compactly embedded in $L^2(\Omega)$ if $\Omega$ is a Lipschitz domain by Rellich's theorem.

Let $0< \beta_0 < \beta_1$ and let 
\[
	m \colon [0,T]\times \Omega \times \R^{d+1} \to [\beta_0,\beta_1]
\] 
and suppose that
$m(t,x,.) \colon \R^{d+1} \to [\beta_0,\beta_1]$ is continuous for a.e.\ $(t, x) \in [0,T] \times \Omega$.

Moreover let 
\[
	f  \colon [0,T]\times \Omega \times \R^{d+1} \to \K
\] 
be measurable.
Suppose that $f(t,x,.) \colon \R^{d+1} \to \K$ is continuous for a.e.\ $(t, x) \in [0,T] \times \Omega$ and
\[
	\abs{f(t,x,v)}^2 \le g(t,x)^2 + h(t) \abs{v}^2 \quad (\text{a.e.\ }(t, x) \in [0,T] \times \Omega, v \in \R^{d+1})
\]
for some $g \in L^2(0,T;H)$ and some non-negative $h \in L^q(0,T)$, where $q >1$.

\begin{proposition}\label{prop:NLP}
For every $u_0 \in V$ there exists an $u \in \MR_\fra(H)$ such that
	\begin{equation}\label{eq:NLCP}
		\left\{  \begin{aligned}
          		&u'  + m(t,x,u,\nabla u) \A u=  f(t,x,u,\nabla u)\\
                       &u(0) =u_0.
		\end{aligned} \right.
	\end{equation}
\end{proposition}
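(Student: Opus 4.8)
The plan is to solve the quasi-linear problem \eqref{eq:NLCP} by a fixed-point argument built on the linear machinery developed above, specifically the multiplicative-perturbation Corollary~\ref{cor:pert} and the Aubin--Lions Corollary~\ref{cor:A-L}. First I would set up the natural solution map. Given a fixed $w \in L^2(0,T;V)$, define the coefficient functions $B_w(t) := m(t,x,w(t),\nabla w(t))^{-1}$ acting as multiplication on $H = L^2(\Omega)$ and the right-hand side $f_w(t) := f(t,x,w(t),\nabla w(t))$. Because $m$ takes values in $[\beta_0,\beta_1]$, the operator $B_w(t) \in \L(H)$ satisfies the uniform bounds $\norm{B_w(t)}_{\L(H)} \le 1/\beta_0$ and the coercivity $(B_w(t)v \mid v)_H \ge (1/\beta_1)\norm{v}_H^2$; the growth hypothesis on $f$ gives $f_w \in L^2(0,T;H)$. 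Applying Corollary~\ref{cor:pert} with $B = B_w$ and $C = 0$ then yields a unique solution $u =: S(w) \in \MR_\fra(H)$ of
\[
	u' + B_w \A u = f_w, \qquad u(0) = u_0,
\]
and the a priori estimate from Corollary~\ref{cor:pert} bounds $\norm{S(w)}_{\MR_\fra(H)}^2$ by a constant times $\norm{f_w}_{L^2(0,T;H)}^2 + \norm{u_0}_V^2$, where the constant is uniform in $w$ since $\beta_0,\beta_1$ and the form data are fixed.

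Next I would identify an invariant convex set and establish compactness. Using the growth bound $\abs{f(t,x,v)}^2 \le g(t,x)^2 + h(t)\abs{v}^2$, I would estimate $\norm{f_w}_{L^2(0,T;H)}^2$ in terms of $\norm{g}_{L^2(0,T;H)}^2$ and $\int_0^T h(t)\norm{w(t)}_V^2\,\d t$; here the integrability $h \in L^q(0,T)$ with $q>1$ is exploited via Hölder together with the continuous embedding $\MR_\fra(H) \hookrightarrow C([0,T];V)$ from Theorem~\ref{thm:MRp1}, so that $\norm{w}_V$ can be controlled in $L^\infty$ and $h$-integration absorbed. This produces a closed ball $K \subset \MR_\fra(H)$ (equivalently a bounded set in $C([0,T];V)$) that $S$ maps into itself. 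Since $V$ is compactly embedded in $H$, Corollary~\ref{cor:A-L} gives that $\MR_\fra(H)$ embeds compactly into $L^p(0,T;V)$; viewing $S$ as a self-map on the bounded set $K$ endowed with the $L^2(0,T;V)$ topology therefore yields relative compactness of $S(K)$, which is the hypothesis needed for Schauder's (or Schaefer's) fixed point theorem.

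The main obstacle, and the step requiring the most care, is continuity of the map $S$ in the $L^2(0,T;V)$ topology. Suppose $w_n \to w$ in $L^2(0,T;V)$; passing to a subsequence one has $w_n \to w$ and $\nabla w_n \to \nabla w$ pointwise a.e.\ on $[0,T]\times\Omega$. The pointwise continuity of $m(t,x,\cdot)$ and $f(t,x,\cdot)$ then gives $B_{w_n}(t) \to B_w(t)$ strongly on $H$ for a.e.\ $t$ and $f_{w_n} \to f_w$ pointwise; dominated convergence (using the uniform bound on $m^{-1}$ and the growth bound on $f$ together with the $C([0,T];V)$-boundedness of the $w_n$) upgrades these to $f_{w_n}\to f_w$ in $L^2(0,T;H)$ and $B_{w_n}\A v \to B_w \A v$ in $L^2(0,T;H)$ for fixed $v$. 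One must then show $u_n := S(w_n) \to S(w)$: the uniform $\MR_\fra(H)$-bound lets one extract a weakly convergent subsequence $u_n \rightharpoonup \tilde u$ in $\MR_\fra(H)$, and passing to the limit in the equation $u_n' + B_{w_n}\A u_n = f_{w_n}$ identifies $\tilde u = S(w)$ by uniqueness. The delicate point is passing to the limit in the nonlinear product $B_{w_n}\A u_n$, where $B_{w_n}$ converges only strongly and $\A u_n$ only weakly; this is handled by combining the a.e.\ convergence of $B_{w_n}$ with the strong $L^2(0,T;V)$ (hence $L^2(0,T;H)$) convergence of $u_n$ supplied by Corollary~\ref{cor:A-L}, so that the product converges in the sense of distributions. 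Once continuity is secured, Schauder's theorem applied to $S\colon K \to K$ produces a fixed point $u = S(u)$, which is precisely a solution of \eqref{eq:NLCP} in $\MR_\fra(H)$, completing the proof.
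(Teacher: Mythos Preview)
Your overall architecture is right: freeze the argument of $m$ and $f$, solve the resulting linear problem via Corollary~\ref{cor:pert}, and close with a fixed-point theorem using the compactness from Corollary~\ref{cor:A-L}. The continuity argument you sketch is essentially the paper's (pass to a.e.\ convergent subsequences, use dominated convergence on $m$ and $f$, and exploit the weak--strong pairing $(\A u_n \mid m(v_n)w)_H$ to pass to the limit), modulo a slip: you should take $B_w = m(\cdot,\cdot,w,\nabla w)$, not its inverse, so that $u'+B_w\A u=f_w$ is the linearized version of \eqref{eq:NLCP}.

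The substantive gap is the invariant-ball step. From the linear estimate of Corollary~\ref{cor:pert} you get
\[
\norm{S(w)}_{\MR_\fra(H)}^2 \le C\big(\norm{f_w}_{L^2(0,T;H)}^2+\norm{u_0}_V^2\big)
\le C\big(\norm{g}_{L^2(0,T;H)}^2+\norm{h}_{L^1}\norm{w}_{L^\infty(0,T;V)}^2+\norm{u_0}_V^2\big),
\]
and to force $S(K)\subset K$ for a ball $K$ of radius $R$ in $\MR_\fra(H)$ you would need $C\norm{h}_{L^1}<1$. Nothing in the hypotheses makes $C\norm{h}_{L^1}$ small, so Schauder on a ball does not close in general. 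The paper gets around this by using Schaefer's theorem on $L^p(0,T;V)$ (with $\tfrac 2 p = 1-\tfrac 1 q$, which is why $q>1$ matters) and deriving the required a~priori bound on $\{u:u=\lambda Su\}$ \emph{not} from the linear estimate of Corollary~\ref{cor:pert} but from the product rule of Proposition~\ref{prop:BV}: testing $u'+m(u)\A u=\lambda f(u)$ against $\A u$ and applying \eqref{eq:BVformula} yields
\[
\alpha\norm{u(t)}_V^2+\beta_0\norm{\A u}_{L^2(0,t;H)}^2 \le M\norm{u_0}_V^2+\norm{g}_{L^2}^2+\int_{(0,t)}\norm{u}_V^2\,(\d\mu_g+h\,\d s),
\]
so the $h$-term sits inside a Gronwall integral and is absorbed regardless of size. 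If you want to repair your argument, replace the invariant-ball search by this Schaefer a~priori estimate; the rest of your outline then goes through.
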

For the proof we need Schaefer's fixed point theorem (see \cite{Eva98}[p.\ 504]).
	\begin{lemma}[Schaefer's fixed point theorem]\label{lem:Schaefer}
		Let $X$ be a Banach space. 
		Suppose $S \colon X \to X$ is a continuous and compact mapping. Assume further that
		the set $\{ u \in X : u=\lambda S u, \lambda \in [0,1] \}$ is bounded. Then $S$ has a fixed point.
	\end{lemma}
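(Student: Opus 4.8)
The plan is to derive Schaefer's theorem from Schauder's fixed point theorem by truncating $S$ so that it becomes a self-map of a large closed ball, while arranging that genuine fixed points are preserved. Since the statement is cited as a classical result, I only sketch the standard argument.

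First, by the boundedness hypothesis there is a constant $M>0$ such that $\norm{u} < M$ for every $u \in X$ with $u = \lambda S u$ for some $\lambda \in [0,1]$. Put $\bar B_M := \{x \in X : \norm x \le M\}$ and define the radial retraction $r \colon X \to \bar B_M$ by $r(x) = x$ if $\norm x \le M$ and $r(x) = M x / \norm x$ if $\norm x > M$. Then $r$ is continuous with $\norm{r(x)} \le M$ for all $x$, and I would set $\tilde S := r \circ S \colon X \to \bar B_M$. Since $S$ is continuous and $r$ is continuous, $\tilde S$ is continuous; since $S$ sends bounded sets to relatively compact sets and $r$ is continuous, $\tilde S(\bar B_M)$ is relatively compact. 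Thus $\tilde S$ restricts to a continuous, compact self-map of the nonempty, closed, bounded, convex set $\bar B_M$, to which Schauder's fixed point theorem applies, yielding some $u \in \bar B_M$ with $u = \tilde S u = r(S u)$.

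The remaining step is to check that this $u$ is actually a fixed point of $S$, and this is the only place where the a priori boundedness hypothesis is used rather than mere continuity and compactness. If $\norm{S u} \le M$, then $r(S u) = S u$ and $u = S u$. Otherwise $\norm{S u} > M$, so $u = r(S u) = M\, (S u)/\norm{S u} = \lambda S u$ with $\lambda := M/\norm{S u} \in (0,1)$; hence $u$ lies in the set appearing in the hypothesis, forcing $\norm{u} < M$. But $\norm{u} = \norm{r(S u)} = M$ because $\norm{S u} > M$, a contradiction. Therefore $\norm{S u} \le M$ and $u = S u$, so $u$ is the desired fixed point. Everything apart from this last verification is routine; the truncation by $r$ is precisely what converts a map of all of $X$ into a self-map of a compact convex set, and the boundedness hypothesis is what prevents the Schauder fixed point from escaping to the sphere $\norm x = M$, where it would fail to be a fixed point of the original $S$.
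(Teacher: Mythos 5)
Your proof is correct and follows essentially the same route as the source the paper relies on: the paper states this lemma without proof, citing Evans (p.~504), and the argument there is exactly your radial truncation $\tilde S = r \circ S$ to the ball $\bar B_M$, an application of Schauder's theorem, and the use of the a priori bound to rule out a Schauder fixed point with $\lVert S u\rVert > M$ lying on the sphere. The only point worth making explicit is that you invoke Schauder in its compact-map form on the closed bounded convex set $\bar B_M$; equivalently one can apply the classical compact-convex version to the closed convex hull of $\tilde S(\bar B_M)$, which is compact by Mazur's theorem.
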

\begin{proof}[Proof of Proposition~\ref{prop:NLP}]
	Let $u_0 \in V$ and $\frac 2 p =1-\frac 1 q$.
	Note that by Rellich's theorem $V$ is compactly embedded in $H$.
	Thus we obtain by Corollary~\ref{cor:A-L} that $\MR_\fra(H)$ is compactly embedded in $L^p(0,T;V)$.
	
	For $v \in L^p(0,T;V)$ we have $f(v) \in L^2(0,T;H)$. 
	Thus by Corollary~\ref{cor:pert} there exists a unique $u_v \in \MR_\fra(H)$ such that
	\begin{equation*}
		\left\{  \begin{aligned}
          		&u_v'  + m(t,x,v,\nabla v) \A u_v=  f(t,x,v,\nabla v)\\
                       &u_v(0) =u_0.
		\end{aligned} \right.
	\end{equation*}
	We define the mapping
	\[
		S \colon L^p(0,T;V) \to L^p(0,T;V)
	\]
	by $v \mapsto u_v$.
	Now if $u$ is a fixed point of $S$, then $u$ is a solution of \eqref{eq:NLCP}.
	We show that $S$ satisfies the conditions of Lemma~\ref{lem:Schaefer}.
	
	First we show that $S$ is continuous. Let $v_n \to v$ in $L^p(0,T;V)$.
	In order to show that $S$ is continuous, it suffices to show that there exists a subsequence
	of $S v_n$ which converges to $Sv$ in $L^p(0,T;V)$.
	By extracting a subsequence we may assume that $(v_n(t,x), \nabla v_n(t,x))$ converges to $((v(t,x),\nabla v(t,x))$ for a.e.\ $(t,x) \in [0,T]\times \Omega$.
	and there exists a $\tilde v \in L^p(0,T; L^2(\Omega)^{d+1}))$ such that $\abs{(v_n(t,x),\nabla v_n(t,x))} \le \abs{\tilde v(t,x)}$ for a.e.\ $(t,x) \in [0,T]\times \Omega$ and all $n \in \N$.
	Thus 
	\[
		\norm{f(v_n)}_{L^2(0,T;H)}^2 \le \norm{g}_{L^2(0,T;H)}^2 + \norm{h}_{L^q(0,T)} \norm{\tilde v}_{L^p(0,T;L^2(\Omega)^{d+1})}
	\]
	for all $n \in \N$.
	We obtain by Corollary~\ref{cor:pert} that $(Sv_n)_{n \in \N}$ is bounded in $\MR_\fra(H)$.
	By extracting another subsequence we may assume that $Sv_n \rightharpoonup u$ in $\MR_\fra(H)$ for some $u \in \MR_\fra(H)$.
	Moreover by the compactness of the embedding of $\MR_\fra(H)$ in $L^p(0,T;V)$ we also may assume (by extracting another subsequence)
	that $Sv_n \to u$ in $L^p(0,T;V)$.
	It remains to show that $u = Sv$ or equivalently that $u$ solves
	\begin{equation*}
		\left\{  \begin{aligned}
          		&u'  + m(t,x,v,\nabla v) \A u=  f(t,x,v,\nabla v)\\
                       &u(0) =u_0.
		\end{aligned} \right.
	\end{equation*}
	We have by the continuity assumptions on $m$ and $f$ that
	\[
		m(t,x,v_n(t,x),\nabla v_n(t,x)) \to m(t,x,v(t,x),\nabla v(t,x)) \quad (n \to \infty)
	\]
	and
	\[
		f(t,x,v_n(t,x),\nabla v_n(t,x)) \to f(t,x,v(t,x),\nabla v(t,x)) \quad (n \to \infty)
	\]
	for a.e.\ $(t,x) \in [0,T]\times \Omega$.
	Hence by the dominated convergence theorem that
	\[
		m(t,x,v_n,\nabla v_n)w \to m(t,x,v,\nabla v)w \quad (n \to \infty) \text{ in }L^2(0,T;H) 
	\]
	for any $w \in L^2(0,T;H)$ and
	\[
		f(t,x,v_n,\nabla v_n) \to f(t,x,v,\nabla v) \quad (n \to \infty) \text{ in }L^2(0,T;H).
	\]
	We set $u_n:= Sv_n$.
	For $w \in L^2(0,T;H)$ we obtain
	\begin{align*}
		(f(v) \mid w)_{L^2(0,T;H)} &= \lim_{n\to \infty} (f(v_n) \mid w)_{L^2(0,T;H)}\\
			&= \lim_{n\to \infty} (u_n'+m(t,x,v_n,\nabla v_n)\A u_n \mid w)_{L^2(0,T;H)}\\
			&= \lim_{n\to \infty} (u_n'\mid w)_{L^2(0,T;H)} + (\A u_n \mid  m(t,x,v_n,\nabla v_n) w)_{L^2(0,T;H)}\\
			&= (u'\mid w)_{L^2(0,T;H)} + (\A u \mid  m(t,x,v,\nabla v) w)_{L^2(0,T;H)}\\
			&= (u' + m(t,x,v,\nabla v)\A u \mid   w)_{L^2(0,T;H)}.
	\end{align*}
	Finally, the weak convergence of $u_n$ to $u$ in $\MR_\fra(H)$ together with the embedding $\MR_\fra(H) \hookrightarrow C([0,T];V)$ shows that $u_n(0) \rightharpoonup u(0)$ in $V$.
	Hence $u(0)=u_0$ and thus $u = S v$.
	
	The mapping $S$ is compact, since the image of $S$ is contained in $\MR_\fra(H)$ and $\MR_\fra(H)$ is compactly embedded in $L^p(0,T;V)$.
	
	It remains to show that the set $\{ u \in L^p(0,T;H) : u=\lambda S u, \lambda \in [0,1] \}$ is bounded.
	Let $\lambda \in (0,1]$ and $u \in L^p(0,T;H)$ such that $u=\lambda S u$.
	Note that $u \in \MR_\fra(H)$.
	We have $u' +m(t,x,u,\nabla u) \A u = \lambda f(u)$ and $u(0)=u_0$.
	Let $t \in [0,T]$. We obtain by Proposition~\ref{prop:BV}
	\begin{align*}
		&\tfrac 1 \beta_0 \norm{f(u)}^2_{L^2(0,t;H)} + \beta_0 \norm{\A u}_{L^2(0,t;H)} + M \norm{u_0}_V^2\\
		& \ge 2 \int_0^t \Re(f(u) \mid \A u)_H \ \d{s} + \fra^+(0,u_0,u_0)\\
		& = 2 \int_0^t \Re(u' \mid \A u)_H \ \d{s} + \fra^+(0,u_0,u_0) + 2 \int_0^t \Re(m(t,x,u,\nabla u) \A u \mid \A u)_H \ \d{s}\\
		& \ge \fra^-(t,u(t),u(t)) - \int_{(0,t)} \norm{u}_V^2 \ \d{\mu_g} + 2 \beta_0\int_0^t \norm{\A u}^2_H \ \d{s}\\
		&\ge \alpha \norm{u(t)}^2_{V} - \int_{(0,t)} \norm{u}_V^2 \ \d{\mu_g} + 2 \beta_0\int_0^t \norm{\A u}^2_H \ \d{s}.
	\end{align*}
	By our assumption on $f$ we obtain the estimate
	\[
		\norm{f(u)}_{L^2(0,t;H)}^2 \le \norm{g}_{L^2(0,t;H)}^2 + \norm{h}_{L^q(0,t)} \norm{u}_{L^p(0,T;V)}.
	\]
	A combination of the above estimates yields
	\[
		\alpha \norm{u(t)}^2_{V} + \beta_0 \norm{\A u}_{L^2(0,t;H)} \le M \norm{u_0}_V^2 + \norm{g}_{L^2(0,T;H)}^2 + \int_{(0,t)} \norm{u}_V^2 \ (\d{\mu_g } + h\ \d s).
	\]
	Finally an application of Gronwall's lemma proves the claim.
\end{proof}
Note that the set $\{ u \in L^p(0,T;H) : u=\lambda S u, \lambda \in [0,1] \}$ is even bounded in $\MR_\fra(H)$.

\bibliographystyle{amsalpha}
\bibliography{Bibliography}

\noindent
\emph{Dominik Dier}, Institute of Applied Analysis, 
University of Ulm, 89069 Ulm, Germany,
\texttt{dominik.dier@uni-ulm.de}

\end{document}